\documentclass[12pt, a4paper]{article}

\usepackage{amsmath, amssymb, amsfonts, amsthm}
\usepackage{geometry}
\usepackage{tikz}
\usetikzlibrary{automata, arrows.meta, positioning, bending}
\usepackage{graphicx}
\usepackage{hyperref}
\usepackage{xcolor}
\usepackage{setspace}
\usepackage{comment}
\usepackage{bm}

\hypersetup{
    colorlinks=true,
    linkcolor=blue,
    filecolor=magenta,      
    urlcolor=cyan,
    citecolor=blue,
}

\newtheorem{theorem}{Theorem}[section]
\newtheorem{lemma}[theorem]{Lemma}
\newtheorem{proposition}[theorem]{Proposition}
\newtheorem{definition}[theorem]{Definition}
\newtheorem{corollary}[theorem]{Corollary}

\theoremstyle{remark}
\newtheorem{remark}[theorem]{Remark}

\title{\textbf{Analytic Combinatorics of $d$-Set Mappings and Their Applications}}
\author{
    \textbf{Toma Diaconescu-Grabari\footnote{tomadiaconescugrabar@cmail.carleton.ca}\;\; and Daniel Panario\footnote{daniel@math.carleton.ca}} \\
    \textit{School of Mathematics and Statistics, Carleton University}}
\date{}

\begin{document}

\maketitle

\begin{abstract}
A $d$-set mapping is a function acting on a domain $X$ equipped with a partition into $d$ disjoint subsets. While standard functions represent $1$-set mappings, generalizations to arbitrary $d$-partite structures appear naturally across discrete mathematics. In this paper, we develop an analytic combinatorial framework to quantify the functional graphs of these mappings. By leveraging generating functions and singularity analysis, we derive exact asymptotic expansions for macroscopic graph properties as the cardinality of $X$ tends to infinity, including the expected number of connected components, cyclic nodes, and tail lengths. We demonstrate the efficacy of this framework by recovering the classical bipartite mapping results of Hansen and Jaworski, and successfully generalize these mechanisms to arbitrary $d$-set mappings, providing the foundational architecture to establish their probabilistic limit laws.
\end{abstract}

\section{Introduction}
\label{sec:introduction}
Random mappings $f: V \rightarrow V$ on a finite set $V$, along with their associated functional graphs, serve as one of the canonical models linking discrete probability and analytic combinatorics. Historically, the structural properties of random permutations and mappings have been extensively studied, notably in the classical probabilistic works of Goncharov \cite{goncharov1962} and Kolchin \cite{kolchin1986}. Bringing this closer to modern analytic combinatorics, Flajolet and Odlyzko \cite{flajolet1990random} established a comprehensive framework to extract the exact asymptotic properties of standard uniform random mappings. In standard uniform models, mappings transition across $V$ entirely unconstrained. However, localized rules and structural constraints frequently arise across discrete mathematics. Notable examples include the partitioned random walks utilized in Pollard's Rho algorithm for the Discrete Logarithm Problem \cite{teske1998}, the block-shuffling mechanisms of Generalized Feistel Networks \cite{luby1988}, and the auditing of Pseudo-Random Number Generators (PRNGs) to ensure their underlying functional graphs emulate uniform random baselines \cite{flajolet1990random}. A generic study of the functional graphs of generalized cyclotomic mappings over finite fields is provided in \cite{bors2026functional}. In this paper, we define a $d$-set mapping as a function acting on a domain $V$ partitioned into $d$ disjoint subsets, and we develop an analytic combinatorial framework to extract the singular expansions governing their macroscopic component structures, proving that their normalized component sizes converge in distribution to a family of Poisson-Dirichlet limit laws.

The bipartite case ($d=2$) serves as a natural baseline for these constrained systems. Hansen and Jaworski \cite{hansen2000large} established limit laws for bipartite random mappings, proving their component sizes converge to a Poisson-Dirichlet distribution. While their work resolves two-dimensional constraints, a generalized analytic framework capable of extracting limit laws for arbitrary $d$-set mappings has, to the best of our knowledge, remained unexplored.

To bridge this gap, we leverage a dual methodology. First, we utilize the symbolic method of analytic combinatorics to translate the combinatorial definitions of $d$-set mappings into systems of exponential generating functions. By applying the Drmota-Lalley-Woods (DLW) theorem to the interdependent tree equations, we resolve the singular behavior of the cyclic components. Second, we invoke the probabilistic framework for logarithmic combinatorial assemblies formalized by Arratia, Barbour, and Tavaré \cite{arratia2003logarithmic} to translate this singular behavior into limit laws for the component size distributions.

Applying this framework, we first recover the bipartite mapping results (Corollary \ref{cor:bipartite_macro}), providing an alternative analytic-probabilistic proof of the limit laws established by Hansen and Jaworski \cite{hansen2000large}. We then extend these algebraic mechanisms to $d$ dimensions. For irreducible transition matrices where mappings remain globally connected, we prove that the cycle generating functions exhibit a universal logarithmic singularity with parameter $\theta = 1/2$ (Theorem \ref{thm:thetaonehalf}), yielding macroscopic component sizes governed by the Poisson-Dirichlet distribution $\mathcal{PD}(1/2)$.   

Finally, by analyzing reducible mappings that fragment into distinct communicating classes, we isolate the exact phase transitions that occur when the graph decomposes. We establish that when a mapping contains $k$ dominant terminal communicating classes sharing the minimal dominant singularity, the logarithmic parameter shifts to $\theta = k/2$. Consequently, the component sizes transition from the universal baseline into the generalized $\mathcal{PD}(k/2)$ family of limit laws (Theorem \ref{thm:thetakhalf}).

The remainder of this paper is organized as follows: Section 2 establishes the analytic combinatorics framework, the probabilistic conditioning mechanics for logarithmic assemblies, and the defining properties of the exp-log class. Section 3 analyzes the bipartite case, serving as an accessible two-dimensional example to explicitly demonstrate the algebraic mechanics and establish baseline limit laws. Section 4 generalizes this architecture to arbitrary irreducible $d$-set mappings, isolating their universal logarithmic singularity. Section 5 evaluates reducible mappings, classifying the exact phase transitions that occur when the functional graph fragments into distinct communicating classes. Finally, Section 6 provides brief conclusions and notes potential cryptographic applications to be explored in future work.

\section{Background and Methodology}
This section establishes the well-known tools required to analyze the component structure of random mappings. We rely on a dual approach: we first utilize the symbolic method of analytic combinatorics to  define the generating functions of our mapping structures, and we then invoke the probabilistic framework of combinatorial assemblies to extract limit laws for the component size distributions, from both macroscopic and microscopic perspectives.

\subsection{Analytic Combinatorics Framework}
\label{sec:analytic_framework}

We adopt the standard analytic combinatorics framework pioneered by Flajolet and Sedgewick \cite{flajolet2009analytic}. The framework relies on translating symbolic definitions of combinatorial classes into exponential generating functions (EGFs) of the form $A(z) = \sum_{n \ge 0} A_n \frac{z^n}{n!}$, and subsequently extracting asymptotic properties via transfer theorems \cite{flajolet1990singularity}.

A mapping $f: V \to V$ on a finite set $V$ corresponds to a functional graph---a finite directed graph where every vertex has out-degree one. Such a graph decomposes into an unordered set of connected components, where each component is a directed cycle of rooted trees. Let $T(z)$, $C(z)$, and $M(z)$ denote the EGFs for trees, cyclic components, and mappings, respectively. The symbolic constructions $\mathcal{C} = \text{CYC}(\mathcal{T})$ and $\mathcal{M} = \text{SET}(\mathcal{C})$ yield the direct functional relationships:
\begin{equation*}
    C(z) = \ln\left(\frac{1}{1 - T(z)}\right) \quad \text{and} \quad M(z) = \exp(C(z)).
\end{equation*}

This direct exponential relationship between the cyclic components $C(z)$ and the macroscopic mapping $M(z)$ exemplifies a universal combinatorial structure formally modeled by the exp-log class \cite{flajolet1990gaussian}. Originally defined by Flajolet and Soria, the exp-log class characterizes assemblies where the irreducible building blocks exhibit a logarithmic singularity and are combined via the exponential SET operator. This structural framework extends well beyond random mappings, governing diverse mathematical objects such as permutations decomposed into distinct cycles and polynomials over finite fields decomposed into irreducible factors. Further asymptotic properties of the exp-log class, including the precise size distributions of the largest and smallest components, have been extensively analyzed in subsequent literature, notably in the thesis of Gourdon \cite{gourdon1996combinatoire} and the collaborative works of Panario and Richmond \cite{panario2001smallest}.

As established in Flajolet and Odlyzko \cite{flajolet1990random}, the tree function for uniform random mappings satisfies the symbolic construction $\mathcal{T} = \mathcal{Z} \star \text{SET}(\mathcal{T})$, yielding the Cayley tree equation $T(z) = z \exp(T(z))$. However, to analyze $d$-set partitioned mappings, we must evaluate a system of interdependent functional equations defining a vector of tree generating functions $\mathbf{T}(z) = (T_1(z), \dots, T_d(z))^T$. 

Once the local singular expansions of such systems are resolved, we translate them into asymptotic coefficient estimates using the standard transfer theorems.

\begin{theorem}[\cite{flajolet1990singularity, flajolet2009analytic}]
\label{thm:transfer}
Let $f(z)$ be an analytic function in a $\Delta$-domain defined as $\{z \colon |z| < R, z \neq \rho, |\arg(z - \rho)| > \phi\}$ for some $R > \rho$ and angle $\phi \in (0, \pi/2)$. If $f(z)$ admits an algebraic singularity of the form $f(z) \sim K\left(1 - \frac{z}{\rho}\right)^{-\alpha}$ for $\alpha \notin \{0, -1, -2, \dots\}$, its coefficients satisfy
\begin{equation*}
    [z^n]f(z) \sim K \frac{n^{\alpha-1}}{\Gamma(\alpha)} \rho^{-n},
\end{equation*}
where $\Gamma$ is the gamma function. Furthermore, for a logarithmic singularity of the form $f(z) \sim \kappa \ln\left(\frac{1}{1 - z/\rho}\right)$, the coefficients transfer directly to
\begin{equation*}
    [z^n]f(z) \sim \kappa \frac{\rho^{-n}}{n}.
\end{equation*}
\end{theorem}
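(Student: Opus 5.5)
The standard transfer theorem is proved in two stages: first compute the coefficients of the exact model functions $(1-z/\rho)^{-\alpha}$ and $\ln(1/(1-z/\rho))$, then show that the error term of the singular expansion contributes only lower order. By the substitution $z\mapsto \rho z$, which multiplies $n$-th coefficients by $\rho^{-n}$, I reduce to $\rho=1$. The hypothesis $f(z)\sim K(1-z)^{-\alpha}$ is read in the standard sense that $f(z)=K(1-z)^{-\alpha}+g(z)$ with $g(z)=o(|1-z|^{-\operatorname{Re}\alpha})$ as $z\to1$ inside the $\Delta$-domain. The logarithmic case is read analogously, with error $o(\ln|1-z|^{-1})$.

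\textbf{Step 1 (exact scale).} For $\alpha\notin\{0,-1,-2,\dots\}$ I use Hankel's representation
\[
[z^n](1-z)^{-\alpha}=\binom{n+\alpha-1}{n}=\frac{\Gamma(n+\alpha)}{\Gamma(\alpha)\Gamma(n+1)}.
\]
Stirling's formula (or the known ratio $\Gamma(n+\alpha)/\Gamma(n+1)\sim n^{\alpha-1}$) then gives $n^{\alpha-1}/\Gamma(\alpha)$. For the logarithm, $[z^n]\ln(1/(1-z))=1/n$ holds exactly.

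\textbf{Step 2 (error transfer).} I write $[z^n]g=\frac{1}{2\pi i}\oint g(z)z^{-n-1}\,dz$ and deform the contour into the $\Delta$-domain. The contour consists of a small circle of radius $1/n$ around $1$, two rays $\arg(z-1)=\pm\phi'$ with $\phi<\phi'<\pi/2$ running from that circle out to modulus $r$ with $1<r<R$, and the outer arc $|z|=r$.

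1. The outer arc contributes $O(r^{-n})$, which is exponentially small.
2. On the small circle, $|g|=o(n^{\operatorname{Re}\alpha})$ and $|z^{-n-1}|=O(1)$, while the length is $O(1/n)$. This gives $o(n^{\operatorname{Re}\alpha-1})$.
3. On the rays, set $z=1+e^{\pm i\phi'}t/n$. Then $|z|^{-n}\le e^{-ct}$ with $c=\cos\phi'>0$. The bound $g=o((t/n)^{-\operatorname{Re}\alpha})$ holds uniformly for $t/n$ small, and a fixed neighbourhood of the remaining part of the ray contributes exponentially less. The ray integral is therefore $o(n^{\operatorname{Re}\alpha-1})\int_1^\infty t^{-\operatorname{Re}\alpha}e^{-ct}\,dt=o(n^{\operatorname{Re}\alpha-1})$.

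The logarithmic case follows the same pattern. Along the rays, $\ln(n/t)\le \ln n+|\ln t|$, and the $|\ln t|$ term integrates against $e^{-ct}$. This gives the bound $o(\ln n/n)$, which is too weak when compared with the main term $1/n$.

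\textbf{Main obstacle.} The delicate point is the logarithmic case with a purely little-$o$ error. If the remainder is merely $o(\ln(1/(1-z)))$, the transferred error is $o(\log n/n)$, not $o(1/n)$. I would therefore interpret the hypothesis, as in Flajolet--Odlyzko, as a genuine expansion $f=\kappa\ln(1/(1-z))+c+O(|1-z|^{\epsilon})$, or at least with error $O(1)$ plus something analytic-transferable.

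Under that reading, the remaining pieces are handled as follows. The constant has no effect on coefficients of index $n\ge1$. The $O(|1-z|^\epsilon)$ term transfers by Step 2 with $\alpha=-\epsilon$ to $O(n^{-1-\epsilon})=o(1/n)$. Adding Steps 1 and 2 and rescaling by $\rho^{-n}$ gives both claimed asymptotics.
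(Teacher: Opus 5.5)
The paper gives no proof of this result (it is quoted from Flajolet--Odlyzko and Flajolet--Sedgewick), and your argument is that standard proof and is correct up to one slip: $|z|^{-n}\le e^{-ct}$ does not follow from $|z|\ge 1+ct/n$, because $(1+ct/n)^{-n}\ge e^{-ct}$, so instead use $|z|^{-n}\le e^{-ct/2}$ for $ct\le n$ (from $\ln(1+x)\ge x/2$ on $[0,1]$), or, as Flajolet--Sedgewick do, observe that $\int_1^\infty t^{-\operatorname{Re}\alpha}(1+ct/n)^{-n}\,dt$ decreases in $n$ and hence stays bounded. Your worry about the logarithmic clause is well founded, since that clause is false as literally stated: $f(z)=\ln\frac{1}{1-z}+\cos\bigl(\ln\frac{1}{1-z}\bigr)$ satisfies $f\sim\ln\frac{1}{1-z}$ throughout the $\Delta$-domain (the cosine is bounded there), yet your Step~1 with $\alpha=\pm i$ gives $n[z^n]f=1+\operatorname{Re}\bigl(n^{i}/\Gamma(i)\bigr)+o(1)$, which has no limit---so a bare $O(1)$ remainder is not enough either, while a constant plus $o(1)$ is, and your strengthened reading is exactly the form available in the paper's applications, where $C(z)=-\ln K+\theta\ln\frac{1}{1-z/\rho}+O\bigl(\sqrt{1-z/\rho}\bigr)$.
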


\subsection{Probabilistic Limit Theorems for Assemblies}
\label{sec:probabilistic_framework}

To derive the joint distribution of component sizes, we adopt the probabilistic framework for logarithmic combinatorial structures formalized by Arratia, Barbour, and Tavar\'e \cite{arratia2003logarithmic}. 

In this framework, a combinatorial assembly is a structure composed of an unordered collection of connected components. This probabilistic concept corresponds exactly to the $\text{SET}$ construction in analytic combinatorics. For random mappings, these components are the directed cycles of rooted trees. If we let $C_k^{(n)}$ denote the discrete random variable representing the number of components of size $k$ in a mapping of total size $n$, these counts are bound by the partition constraint:
\begin{equation*}
    \sum_{k=1}^n k C_k^{(n)} = n.
\end{equation*}

The exact summation of the sizes to $n$ forces the variables $C_k^{(n)}$ to be dependent. To circumvent this dependency, we bypass the discrete graph entirely and model the component counts as independent random variables, subsequently conditioning on the total size. 

Let $(Z_1, Z_2, \dots, Z_n)$ be a sequence of mutually independent Poisson random variables where $Z_k \sim \operatorname{Po}(\lambda_k)$. The expected value is defined via the analytic component generating function $C(z)$ and the dominant singularity $\rho$:
\begin{equation}
    \lambda_k = [z^k]C(z) \rho^k. \label{eq:poisson_expected}
\end{equation}

The conditioning relation of the framework in \cite{arratia2003logarithmic} guarantees that the joint distribution of the dependent component counts $(C_1^{(n)}, \dots, C_n^{(n)})$ is identical to that of the independent Poisson variables, conditioned on the event that their size-weighted sum equals $n$:
\begin{equation}
    \mathcal{L}\left(C_1^{(n)}, \dots, C_n^{(n)}\right) = \mathcal{L}\left(Z_1, \dots, Z_n \ \colon \sum_{k=1}^n k Z_k = n\right), \label{eq:conditioning_relation}
\end{equation}
where $\mathcal{L}(\cdot)$ denotes the probability law (or joint distribution) of the random vector.

\begin{remark}
    The emergence of the Poisson distribution in this relation is not arbitrary, but rather a direct consequence of the $\text{SET}$ operator. As an assembly constitutes an unordered collection, constructing a configuration with exactly $c$ components of the same size necessitates dividing the combinatorial weight by $c!$ to prevent overcounting permutations. The Poisson probability mass function inherently mirrors this $1/c!$ exponential symmetry. This structural correspondence naturally extends to other decomposable classes: combinatorial structures allowing unrestricted component repetition (\emph{multisets}) are governed by negative binomial distributions, whereas structures that forbid component repetition (\emph{selections}) are governed by binomial distributions.
\end{remark}

To extract limit laws from this conditioning relation, the combinatorial structure must exhibit a specific asymptotic behavior in its expected component counts, known as the \emph{logarithmic condition}.

\begin{definition}
\label{def:logarithmic_condition}
An assembly is defined as logarithmic with parameter $\theta > 0$ if the expected counts of its components satisfy:
\begin{equation}
    k \lambda_k \to \theta \quad \text{as} \quad k \to \infty.
\end{equation}
\end{definition}

We observe the direct correspondence between Theorem \ref{thm:transfer} (transfer theorem) and Definition \ref{def:logarithmic_condition} (logarithmic condition). If the cycle generating function $C(z)$ exhibits a logarithmic singularity of the form $C(z) \sim \theta \ln\left(\frac{1 - z}{\rho}\right)^{-1}$, the transfer theorem dictates that $[z^k]C(z) \rho^k \sim \frac{\theta}{k}$. Thus, the singularity of the generating function forces the assembly to satisfy the logarithmic condition. 

\subsection{Multivariate Analytic Systems and the Jacobian}
\label{sec:multivariate_systems}

To resolve the dominant singularities of interdependent functional equations, we rely on the Multivariate Analytic Implicit Function Theorem and its extensions for non-negative power series.

\begin{theorem}[{\cite[Theorem B.6]{flajolet2009analytic}}]
\label{thm:maift}
Let $f_i(x_1, \dots, x_m ; z_1, \dots, z_p)$, with $1 \leq i \leq m$, be analytic functions in the neighbourhood of a point $x_j = a_j$, $1 \leq j \leq m$, and $z_k = c_k$, $1 \leq k \leq p$. Assume that the Jacobian determinant defined as
\begin{equation*}
    \Delta := \det \left( \frac{\partial f_i}{\partial x_j} \right)
\end{equation*}
is non-zero at the point considered. Then the equations (in the $x_j$, $1 \leq j \leq m$)
\begin{equation*}
    y_i = f_i(x_1, \dots, x_m ; z_1, \dots, z_p), \quad 1 \leq i \leq m,
\end{equation*}
admit a solution with the $x_j$ near to the $a_j$, $1 \leq j \leq m$, when the $z_k$ are sufficiently near to the $c_k$, $1 \leq k \leq p$, and the $y_i$ near to the $b_i := f_i(a_1, \dots, a_m ; c_1, \dots, c_p)$, $1 \leq i \leq m$: one has
\begin{equation*}
    x_j = g_j(y_1, \dots, y_m ; z_1, \dots, z_p), \quad 1 \leq j \leq m,
\end{equation*}
where each $g_j$, $1 \leq j \leq m$, is analytic in a neighbourhood of the point $(b_1, \dots, b_m ; c_1, \dots, c_p)$.
\end{theorem}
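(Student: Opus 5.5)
The statement is the multivariate analytic implicit function theorem. My plan is to reduce it to the classical one-variable analytic implicit function theorem, or more precisely to the multivariate inverse function theorem for holomorphic maps, by working in $\mathbb{C}^{m+p}$.

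\emph{Setup.} Define the map $\Phi:(x,z)\mapsto(f(x;z),z)$ from a neighbourhood of $(a,c)\in\mathbb{C}^{m}\times\mathbb{C}^{p}$ into $\mathbb{C}^{m}\times\mathbb{C}^{p}$. Its complex Jacobian matrix is block lower-triangular, with blocks $\left(\partial f_i/\partial x_j\right)$ and the $p\times p$ identity. Its determinant is therefore exactly $\Delta$, which is nonzero at $(a,c)$ by hypothesis. If we can show that $\Phi$ has a local holomorphic inverse $\Psi$ near $(b,c)$, then $\Psi$ necessarily has the form $\Psi(y,z)=(g(y;z),z)$. The identity $\Phi\circ\Psi=\mathrm{id}$ then gives $f_i(g(y;z);z)=y_i$, which is the claim.

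\emph{Key steps.}
\begin{enumerate}
\item Regard $\Phi$ as a real $C^\infty$ map on $\mathbb{R}^{2(m+p)}$. Its real Jacobian determinant equals $|\det D_{\mathbb{C}}\Phi|^2=|\Delta|^2\neq 0$. The real inverse function theorem then gives a $C^1$ local inverse $\Psi$ on a neighbourhood of $(b,c)$.
\item Show that $\Psi$ is holomorphic. Differentiate $\Phi\circ\Psi=\mathrm{id}$: the real differential $D\Psi$ equals $(D\Phi)^{-1}$. Since $D\Phi$ is $\mathbb{C}$-linear, so is its inverse. Hence $\Psi$ satisfies the Cauchy--Riemann equations in each variable, and a $C^1$ function satisfying them is holomorphic by Osgood's lemma.
\item Read off $g$ as the first $m$ components of $\Psi$. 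Analyticity of each $g_j$ in a neighbourhood of $(b;c)$ follows from step 2.
\end{enumerate}

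\emph{Alternative route.} If one prefers to avoid real analysis, the inverse can be built directly. Solve $x=a+A^{-1}\bigl(y-b-R(x,z)\bigr)$ by Picard iteration, where $A=\left(\partial f_i/\partial x_j\right)(a,c)$ and $R$ is the nonlinear remainder. This is a contraction on a small closed polydisc. The iterates are holomorphic and converge uniformly, so by Weierstrass's theorem the limit is holomorphic.

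\emph{Main obstacle.} I expect the main difficulty to be the contraction estimate in the alternative route. One needs $\|\partial R/\partial x\|$ to be small on a small polydisc, uniformly in $(y,z)$ near $(b,c)$. I would obtain this from Cauchy estimates on the second derivatives of $f$. In the Osgood route, the only delicate point is justifying that a $C^1$ map whose differential is complex-linear is holomorphic, which is standard.
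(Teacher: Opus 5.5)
Your proof is correct. Note, though, that the paper gives no proof of this statement: it is imported verbatim from Flajolet--Sedgewick (Appendix B, Theorem B.6) and used as a black box in Section~\ref{sec:multivariate_systems} and in the proof of Lemma~\ref{lem:jacobianequivalence}. The natural comparison is therefore with the classical complex-analytic proofs. Those build the solution directly as a convergent power series via Cauchy's method of majorants. In one variable they can instead write the root $w(z)$ of $F(z,w)=0$ as the argument-principle contour integral $\frac{1}{2\pi i}\oint w\,F_w(z,w)/F(z,w)\,dw$.

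Your first route delegates all the analysis to the real inverse function theorem for $\Phi(x,z)=(f(x;z),z)$, using $\det_{\mathbb{R}}D\Phi=|\Delta|^2\neq 0$. It then recovers holomorphy from the $\mathbb{C}$-linearity of $D\Psi=(D\Phi)^{-1}$. This is the shortest argument, but it gives no explicit control on the size of the neighbourhood where $g$ is analytic. Your Picard-iteration route is self-contained and quantitative, since the admissible polydisc is governed explicitly by $\|A^{-1}\|$ and bounds on $f$.

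The contraction estimate you single out as the main obstacle is in fact mild. Since $\partial R/\partial x=\partial f/\partial x(x;z)-A$, it is small near $(a,c)$ by continuity alone, with no need for Cauchy estimates. (Also, with rows ordered as $(f,z)$ and columns as $(x,z)$, the Jacobian of $\Phi$ is block upper-triangular rather than lower-triangular; this does not change its determinant.)

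Finally, both of your routes give local uniqueness of the solution: injectivity of $\Phi$ in the first, uniqueness of the fixed point in the second. The statement omits uniqueness, but the paper tacitly needs it when it identifies the implicit solution branch with the power series $\mathbf{T}(z)$.
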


Consider a system of functional equations written in the implicit form $\mathbf{F}(z, \mathbf{y}) = \mathbf{y} - \mathbf{\Phi}(z, \mathbf{y}) = \mathbf{0}$. By Theorem \ref{thm:maift}, the solution vector $\mathbf{y}(z)$ remains analytic at a point $z$ provided that the Jacobian matrix of the implicit system, $\frac{\partial \mathbf{F}}{\partial \mathbf{y}} = \mathbf{I} - \mathbf{J}$ (where $\mathbf{J} = \left[ \frac{\partial \Phi_i}{\partial y_j} \right]$), is invertible. 

Consequently, any breakdown of analyticity must coincide with this matrix becoming singular. For systems of combinatorial generating functions, the coefficients are inherently non-negative. By Pringsheim's theorem \cite[Theorem IV.1]{flajolet2009analytic}, the dominant singularity $\rho$ of such a system must lie strictly on the positive real axis. Evaluating the system for real $z > 0$ guarantees that the Jacobian $\mathbf{J}$ is a non-negative matrix. Thus, by the Perron-Frobenius theorem \cite[Note V.34]{flajolet2009analytic}, its largest eigenvalue (the spectral radius) is a real, positive number. Because the underlying power series grow monotonically along the positive real axis, the spectral radius of $\mathbf{J}$ strictly increases as $z$ grows. Bell, Burris, and Yeats \cite{bell2011characteristic} formalized that this breakdown of analyticity occurs exactly when this monotonically growing spectral radius reaches $1$. This establishes $1$ as an eigenvalue of $\mathbf{J}$, forcing the operator $\mathbf{I} - \mathbf{J}$ to lose invertibility and yielding the boundary equation $\det(\mathbf{I} - \mathbf{J}) = 0$.

\section{The Bipartite Case}
\label{sec:bipartite}

Before analyzing the general $d$-set partition, we first establish the bipartite mapping model. This two-dimensional example serves as an introduction to the mechanics governing the multidimensional generalization.

\subsection{Symbolic Construction and the Transition Matrix}

The bipartite mapping property defined below models the fundamental behavior of elements transitioning between distinct subsets. As introduced in Section \ref{sec:introduction}, such structural constraints arise naturally across discrete mathematics; for instance, the two-dimensional bipartite case explicitly models the functional graphs of generalized cyclotomic mappings over finite fields, where algebraic operations map elements strictly between distinct cyclotomic cosets \cite{bors2026functional}. By initially isolating this bipartite property, we can clearly establish the analytic architecture and macroscopic component statistics for a large class of mappings. While this section focuses purely on the bipartite baseline, this foundational framework paves the way for the non-bipartite and multi-dimensional configurations introduced earlier.

\begin{definition}
Let $f:V\to V$ be a mapping on a finite set $V$, and let $(S_0,S_1)$ be a bipartition of $V$. The function $f$ satisfies the \emph{bipartite mapping property} if $f(S_0)\subseteq S_1$ and $f(S_1)\subseteq S_0$.
\end{definition}

\begin{figure}[htbp]
    \centering
    \begin{tikzpicture}[
        >={Stealth[scale=1.2]}, 
        node distance=3cm,     
        on grid,
        every state/.style={thick, fill=blue!5, draw=blue!80!black, minimum size=1cm}
    ]

    \begin{scope}[shift={(6,0)}]
        \node[state] (C0) {$S_0$};
        \node[state] (C1) [right=of C0] {$S_1$};
        \path[->, thick] 
        (C0) edge [bend left=20] (C1)
        (C1) edge [bend left=20] (C0);
    \end{scope}

    \end{tikzpicture}
    \caption{Macroscopic transition diagram illustrating the bipartite mapping property.}
    \label{fig:bipartite_mapping}
\end{figure}

Building upon the standard symbolic construction of functional graphs established in Section \ref{sec:analytic_framework}, we adapt the tree structures to reflect the bipartition of their root nodes. Let $\mathcal{Z}_0$ and $\mathcal{Z}_1$ denote the atomic classes of nodes belonging to $S_0$ and $S_1$, respectively. The interdependent tree classes are defined symmetrically:
\begin{align}
    \mathcal{T}_0 &= \mathcal{Z}_0 \star \text{SET}(\mathcal{T}_1), \\
    \mathcal{T}_1 &= \mathcal{Z}_1 \star \text{SET}(\mathcal{T}_0).
\end{align}

Let $w_{ij}$ represent the adjacency indicator from $S_i$ to $S_j$, where $i,j\in \{0,1\}$. By definition of the bipartition, nodes map to the opposite partition, forcing the bipartite transition matrix $W$ to be anti-diagonal:
\begin{equation}
    W = \begin{pmatrix} w_{00} & w_{01} \\ w_{10} & w_{11} \end{pmatrix} = \begin{pmatrix} 0 & 1 \\ 1 & 0 \end{pmatrix}.
\end{equation}

Let $p \in (0,1)$ represent the proportion of nodes in $S_0$, and $1-p$ the proportion of nodes in $S_1$. Tracking the weight of nodes from $S_0$ with $pz$ and nodes from $S_1$ with $(1-p)z$, we translate the symbolic tree equations into a system of EGFs:
\begin{subequations}
\label{eq:treeeqs}
\begin{align}
    T_0(z) &= pz \exp(T_1(z)), \label{eq:T0_bipartite} \\
    T_1(z) &= (1-p)z \exp(T_0(z)). \label{eq:T1_bipartite}
\end{align}
\end{subequations}

Before evaluating the cyclic components, we observe how this bivariate system collapses at its symmetric and boundary limits.

\begin{remark}[Symmetric and Boundary Cases]
If $p = 1/2$, symmetry forces $T_0(z) = T_1(z) = T(z)$, and the system collapses to $T(z) = \frac{1}{2}z \exp(T(z))$, the standard Cayley tree scaled by $1/2$. 

If $p \to 1$, the partition probability forces all nodes into $S_0$. The tree equations degenerate to $T_1(z) = 0$ and $T_0(z) = z$, meaning a tree in $S_0$ is merely an isolated root node. Conversely, if $p \to 0$, the exact symmetric collapse occurs in the opposite partition, forcing all nodes into $S_1$ and yielding $T_0(z) = 0$ and $T_1(z) = z$.
\end{remark}

\subsection{The Transfer Matrix Method and Generating Functions}

To derive the explicit algebraic form of the cycle generating function $C(z)$, we view the cyclic components as directed necklaces of rooted trees. By encoding the bipartite transitions into a transfer matrix \cite[Section V.6]{flajolet2009analytic}, we enumerate these closed alternating walks.

\begin{proposition}
\label{prop:bipartite_gfs}
Let $A(z) = \operatorname{diag}(T_0(z), T_1(z))W$ be the transfer matrix encoding the weight of trees planted along a directed bipartite cycle. The exponential generating functions for the cyclic components $C(z)$ and the full bipartite mapping $M(z)$ are:
\begin{align}
    C(z) &= \ln\left(\frac{1}{1-T_0(z)T_1(z)}\right), \label{eq:bipartite_C} \\
    M(z) &= \frac{1}{1-T_0(z)T_1(z)}. \label{eq:bipartite_M}
\end{align}
\end{proposition}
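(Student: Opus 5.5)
The plan is to derive $C(z)$ from the cycle construction via the transfer matrix $A(z)$, and then obtain $M(z)$ from the SET construction $M=\exp(C)$, exactly as in the unpartitioned case of Section~\ref{sec:analytic_framework}.

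\textbf{Structure of a cyclic component.} A connected component of a bipartite mapping is a directed cycle of rooted trees, and the roots alternate between $S_0$ and $S_1$. Since $W$ is anti-diagonal, a cycle of length $m$ in the functional graph exists only for even $m=2\ell$. Reading the roots along the cycle produces a word $T_0T_1T_0T_1\cdots$ (up to rotation).

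\textbf{Counting labelled cycles via the transfer matrix.} Labelled (EGF) cycles of length $m$ built from a sequence of trees whose consecutive types are compatible with $W$ are counted by $\frac{1}{m}\operatorname{tr}(A(z)^m)$. The trace counts closed walks with a distinguished starting position, and the factor $1/m$ removes the choice of starting point. This is legitimate for labelled objects, because the labels make every rotation distinct. This is the standard relation $\mathrm{CYC}$ $\leftrightarrow$ $\sum_m \frac1m \operatorname{tr}A^m$.

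\textbf{Computing the traces.} With $A=\begin{pmatrix}0&T_0\\T_1&0\end{pmatrix}$ we have $A^2=T_0T_1 I$. Hence $\operatorname{tr}A^{2\ell}=2(T_0T_1)^\ell$, and odd powers have zero trace. Therefore
\[
C=\sum_{\ell\ge1}\frac{2(T_0T_1)^\ell}{2\ell}=\ln\frac{1}{1-T_0T_1}.
\]
Equivalently, $C=-\ln\det(I-A)=-\operatorname{tr}\ln(I-A)$, with $\det(I-A)=1-T_0T_1$.

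\textbf{Obtaining $M(z)$.} Since components form a SET, $M=\exp(C)=1/(1-T_0T_1)$.

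\textbf{Main obstacle: the weighting of the cycles.} The step needing care is justifying the $1/m$ normalisation for cycles of trees of two types, particularly the relation between starting in $S_0$ and starting in $S_1$. A cycle of length $2\ell$ is rooted at $2\ell$ positions. Half of these lie in $S_0$, each contributing to the $(0,0)$ diagonal entry of $A^{2\ell}$, and half lie in $S_1$, each contributing to the $(1,1)$ entry. Summing both diagonal entries therefore counts each cycle exactly $2\ell$ times, and dividing by $2\ell$ is correct.

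To make this rigorous, I would argue at the level of labelled structures rather than through a word-counting heuristic. Concretely, I would pass to the pointed class $\Theta\mathcal{C}$, i.e.\ components with a distinguished cyclic node. Its EGF is a sum of sequences: $zC'(z)$ can be matched with $\sum_{\ell}$ of pointed alternating sequences, together with the differential identity $\Theta\ln\frac{1}{1-u}=\frac{\Theta u}{1-u}$.

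A secondary subtlety is that the marks $pz$ and $(1-p)z$ are just weights on atoms. They do not affect the combinatorial identity, so the formulas hold as formal power series in $z$.
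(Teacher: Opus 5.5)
Your proposal is correct and follows essentially the paper's proof: encode the alternating cycle in $A(z)$, weight labelled cycles of $k$ trees by $\frac{1}{k}\operatorname{Tr}(A(z)^k)$ because the labels make all $k$ rotations distinct, sum over $k$, and take $M=\exp(C)$. The only difference is that you evaluate the traces directly from $A(z)^2=T_0(z)T_1(z)I$, whereas the paper uses Jacobi's formula $\operatorname{Tr}\ln(I-A)=\ln\det(I-A)$.

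Your rotation argument already settles the $1/k$ normalisation, so the pointing detour is unnecessary. As you describe it, it is also slightly off: $\Theta\mathcal{C}$ (EGF $zC'(z)$) marks an arbitrary node, whereas marking a cyclic node gives $\sum_k \operatorname{Tr}(A(z)^k)$ instead.
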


\begin{proof}
We construct the transfer matrix $A(z)$ by multiplying the diagonal matrix of tree generating functions by the transition matrix $W$:
\begin{equation*}
    A(z) = \begin{pmatrix} T_0(z) & 0 \\ 0 & T_1(z) \end{pmatrix} \begin{pmatrix} 0 & 1 \\ 1 & 0 \end{pmatrix} = \begin{pmatrix} 0 & T_0(z) \\ T_1(z) & 0 \end{pmatrix}.
\end{equation*}

The $(i,j)$ entry of $A(z)^k$ enumerates the valid walks of length $k$ from partition $i$ to partition $j$, assigning the planted tree weight of the origin partition prior to each transition step. The trace $\operatorname{Tr}(A(z)^k)$ sums the diagonal entries, isolating the closed walks of length $k$. 

A cyclic component (a necklace) of length $k$ is formed by gluing the ends of a closed walk. As there are $k$ indistinguishable starting points under the cyclic shift, the total weight of length $k$ cycles is $\frac{1}{k} \operatorname{Tr}(A(z)^k)$. Summing over all possible lengths yields the full cycle generating function:
\begin{align*}
    C(z) &= \sum_{k=1}^\infty \frac{\operatorname{Tr}(A(z)^k)}{k} = \operatorname{Tr}\left( \sum_{k=1}^\infty \frac{A(z)^k}{k} \right)= \operatorname{Tr}\Big( -\ln(I-A(z)) \Big).
\end{align*}

Applying Jacobi's formula, $\operatorname{Tr}(\ln(X)) = \ln(\det(X))$, we transform the matrix trace into a scalar determinant:
\begin{equation*}
    C(z) = \ln\left(\frac{1}{\det(I-A(z))}\right).
\end{equation*}

Evaluating the determinant for the bipartite case yields $\det(I-A(z)) = 1 - T_0(z)T_1(z)$, satisfying Equation \eqref{eq:bipartite_C}. 

Finally, since the complete functional graph is an unordered collection of these cyclic components, we apply the symbolic $\text{SET}$ operator, which corresponds to the exponential function:
\begin{equation*}
    M(z) = \exp(C(z)) = \frac{1}{1 - T_0(z) T_1(z)},
\end{equation*}
satisfying Equation \eqref{eq:bipartite_M} and concluding the proof.
\end{proof}

With the generating functions $C(z)$ and $M(z)$ established, the asymptotic behavior of the bipartite mapping is now entirely dictated by the dominant singularity of the interdependent tree system $(T_0(z), T_1(z))$. To resolve this singularity, we evaluate the system's analytic properties.

\subsection{Analytic Properties and the DLW Theorem}

To extract the asymptotic behavior of $M(z)$, we must determine the dominant singularity of the tree system. For an arbitrary $p \neq 1/2$, the asymmetry yields $T_0(z) \neq T_1(z)$, meaning the system must be resolved using the Drmota-Lalley-Woods (DLW) theorem.

\begin{definition}[{\cite[p. 489, and p. 493 Note VII.29]{flajolet2009analytic}}]
\label{def:analytic_system}
Consider a nonlinear system of functional equations $\mathbf{y}(z) = \mathbf{\Phi}(z, \mathbf{y}(z))$, where $\mathbf{\Phi}$ is a vector of functions analytic at the origin. The system is defined by the following algebraic properties:
\begin{itemize}
    \item \textbf{Algebraic positivity (a-positive):} The analytic functions $\Phi_j$ have non-negative MacLaurin coefficients.
    \item \textbf{Algebraic properness (a-proper):} The system satisfies a strict Lipschitz condition $d(\mathbf{\Phi}(\mathbf{y}), \mathbf{\Phi}(\mathbf{s})) < K d(\mathbf{y}, \mathbf{s})$ for some $K < 1$, where $d$ is the formal distance metric $d(\mathbf{u}, \mathbf{v}) = 2^{-\text{val}(\mathbf{u}-\mathbf{v})}$, and the valuation is the exponent of the lowest non-zero monomial.
    \item \textbf{Algebraic irreducibility (a-irreducible):} The dependency graph of the system is strongly connected.
    \item \textbf{Algebraic aperiodicity (a-aperiodic):} For each component solution $y_j(z)$, the support of its MacLaurin coefficient sequence $[z^n]y_j(z)$ contains all sufficiently large integers, meaning the period is exactly $1$.
\end{itemize}
\end{definition}

\begin{remark}
The properties of Definition \ref{def:analytic_system} were formulated by Flajolet and Sedgewick \cite[p. 489]{flajolet2009analytic} strictly for polynomial systems, guaranteeing a unique sequence solution via the fixed-point theorem on the complete metric space of formal power series. However, the universal square-root singularities guaranteed by these properties extend to systems of non-negative analytic functions, provided a positive solution exists within their domain of analyticity. This extension, originally pioneered by Drmota \cite{drmota1997systems}, was formally generalized to multidimensional non-negative power series systems by Bell, Burris, and Yeats \cite{bell2011characteristic}. This analytic extension is what permits the inclusion of the exponential operator $\exp(T_i)$ in our functional graph derivations.
\end{remark}

\begin{theorem}[{\cite{drmota1997systems}, \cite[Theorem VII.6]{flajolet2009analytic}}]
\label{thm:dlw}
Let $\mathbf{y}(z) = \mathbf{\Phi}(z, \mathbf{y}(z))$ be a nonlinear analytic system that is a-positive, a-proper, and a-irreducible. Then all component solutions $y_i(z)$ share the same radius of convergence $\rho < \infty$. 

Furthermore, if the system is a-aperiodic, $\rho$ is the unique dominant singularity on the circle of convergence, and in a neighborhood of $\rho$, each $y_i(z)$ admits a universal square-root expansion:
\begin{equation*}
    y_i(z) = g_i(z) - h_i(z)\sqrt{1 - \frac{z}{\rho}},
\end{equation*}
where $g_i$ and $h_i$ are analytic at $\rho$, and $h_i(\rho) > 0$.
\end{theorem}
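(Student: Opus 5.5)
The proof runs in four stages: first a common radius of convergence, then locating the singularity where the Jacobian reaches spectral radius $1$, then a reduction to a scalar equation giving the square root, and finally aperiodicity to rule out other singularities on the circle. Throughout I use the spectral picture of Section~\ref{sec:multivariate_systems}.

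\emph{Common radius.} Because the system is a-positive and a-proper, iterating $\mathbf{y}\mapsto\mathbf{\Phi}(z,\mathbf{y})$ from $\mathbf{0}$ converges formally to a unique solution with non-negative coefficients. By a-irreducibility, each $y_i$ depends (through a chain in the dependency graph with non-negative coefficients) on every $y_j$. Hence $y_j$ is coefficientwise dominated by a positive multiple of $z^m$ times a series composed into $y_i$. So a singularity of any $y_j$ at a positive real point forces one for $y_i$, and all $y_i$ share one radius $\rho$. That $\rho<\infty$ follows from nonlinearity (the standing assumption excluding affine systems): along the positive axis, a genuinely nonlinear positive system cannot have an entire positive solution. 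One can see this by pairing with a Perron vector and using convexity of $\mathbf{\Phi}$ in $\mathbf{y}$.

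\emph{Locating the singularity.} For real $0<z<\rho$, the Jacobian $\mathbf{J}(z)=\partial_{\mathbf{y}}\mathbf{\Phi}(z,\mathbf{y}(z))$ is non-negative and irreducible, with Perron root $\lambda(z)$ strictly increasing. While $\lambda(z)<1$, the matrix $\mathbf{I}-\mathbf{J}$ is invertible, so by Theorem~\ref{thm:maift} the solution continues analytically. Therefore at $z=\rho$ we must have $\boldsymbol{\tau}:=\mathbf{y}(\rho)$ and $\lambda(\rho)=1$.

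The step I expect to be the main obstacle is justifying that $\boldsymbol{\tau}$ is finite and lies inside the domain of analyticity of $\mathbf{\Phi}$, and that $\lambda$ reaches exactly $1$ rather than $y$ blowing up first. For polynomial systems this is automatic. In the analytic (exponential) setting I would argue as follows. Let $\mathbf{u}>0$ be the left Perron vector of $\mathbf{J}(z)$. Convexity gives
\[
\mathbf{u}^T\mathbf{\Phi}(z,\mathbf{y})\ge \mathbf{u}^T\mathbf{\Phi}(z,\mathbf{0})+\mathbf{u}^T\mathbf{J}\,\mathbf{y},
\]
which bounds $\mathbf{u}^T\mathbf{y}$ while $\lambda<1$. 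This yields finiteness, and otherwise I would cite Bell--Burris--Yeats, as the preceding remark does.

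\emph{Local expansion.} At $(\rho,\boldsymbol{\tau})$, the kernel of $\mathbf{I}-\mathbf{J}$ is one-dimensional and spanned by the positive right Perron vector $\mathbf{v}$, with $\mathbf{u}$ the positive left one. I perform a Lyapunov--Schmidt reduction. Write $\mathbf{y}=\boldsymbol{\tau}+s\mathbf{v}+\mathbf{w}$ with $\mathbf{u}^T\mathbf{w}=0$. The complementary equations have an invertible Jacobian, so Theorem~\ref{thm:maift} gives $\mathbf{w}=\mathbf{w}(z,s)$ analytic. What remains is the scalar equation
\[
G(z,s):=\mathbf{u}^T\bigl(\mathbf{y}-\mathbf{\Phi}(z,\mathbf{y})\bigr)=0 .
\]
At the critical point its derivatives are:
\begin{itemize}
\item $G_s=0$, because $\mathbf{u}^T(\mathbf{I}-\mathbf{J})=\mathbf{0}$;
\item $G_z=-\mathbf{u}^T\partial_z\mathbf{\Phi}<0$, by positivity;
\item $G_{ss}=-\mathbf{u}^T\partial^2_{\mathbf{y}}\mathbf{\Phi}[\mathbf{v},\mathbf{v}]<0$, by positivity and nonlinearity.
\end{itemize}
Weierstrass preparation then gives $s=-c\sqrt{1-z/\rho}+O(1-z/\rho)$ with $c>0$, where the sign is fixed by monotonicity on $(0,\rho)$. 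Consequently $y_i=g_i(z)-h_i(z)\sqrt{1-z/\rho}$ with $h_i(\rho)=c\,v_i>0$.

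\emph{Uniqueness on the circle.} Under a-aperiodicity, for $|z|=\rho$ with $z\ne\rho$ we have the entrywise bound $|\mathbf{J}(z)|\le\mathbf{J}(\rho)$, with strict inequality in some entry. Wielandt's strict monotonicity of the Perron root then gives spectral radius of $\mathbf{J}(z)$ less than $1$. So $\mathbf{I}-\mathbf{J}(z)$ is invertible there, Theorem~\ref{thm:maift} gives analytic continuation at every such point, and $\rho$ is the unique dominant singularity.
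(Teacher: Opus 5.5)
Your second stage has a genuine gap, and the repair you sketch does not close it.

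The displayed inequality points the wrong way. With $\mathbf{J}$ evaluated at the solution (as it must be, since $\mathbf{u}$ is its Perron vector), non-negativity of the coefficients gives the Euler-type bound $\mathbf{\Phi}(z,\mathbf{y})\le\mathbf{\Phi}(z,\mathbf{0})+\mathbf{J}(z)\,\mathbf{y}$, because a monomial of degree $|\alpha|\ge 1$ in $\mathbf{y}$ is counted $|\alpha|$ times on the right. Moreover, $\mathbf{\Phi}$ is convex only along rays through the origin, not in $\mathbf{y}$ (consider $y_1y_2$). Even after correction, pairing with $\mathbf{u}$ gives only $(1-\lambda(z))\,\mathbf{u}^T\mathbf{y}(z)\le\mathbf{u}^T\mathbf{\Phi}(z,\mathbf{0})$. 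This bound degenerates exactly as $\lambda(z)\to 1$, i.e.\ precisely where you need $\bm{\tau}$ to be finite.

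What actually gives finiteness is nonlinearity used as a lower bound. Irreducibility yields constants with $y_i(x)\ge c\,y_j(x)$ for all $i,j$ and $x\in[x_0,\rho)$. A monomial of degree at least $2$ in some $\Phi_{i_0}$ then gives $y_{i_0}(x)\ge c'\,y_{i_0}(x)^2$, so every $y_j$ stays bounded; the same inequality also yields $\rho<\infty$.

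Separately, ``while $\lambda(z)<1$ the solution continues, hence $\lambda(\rho)=1$'' does not exclude $\lambda(z_1)=1$ at some $z_1<\rho$ where $\mathbf{y}$ is still analytic. You can rule this out by differentiating, $(\mathbf{I}-\mathbf{J})\mathbf{y}'=\partial_z\mathbf{\Phi}$, and pairing with $\mathbf{u}>0$ to get $0=\mathbf{u}^T\partial_z\mathbf{\Phi}>0$.

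Finally, when $\mathbf{\Phi}$ is not entire, this step cannot be proved from the listed hypotheses at all. Take $y=z\psi(y)$ with $\psi(y)=1+\sum_{n\ge 2}y^n/n^3$: every listed condition holds, yet $\tau\psi'(\tau)=\psi(\tau)$ has no root in $(0,1)$. The solution $y$ reaches the boundary of convergence of $\psi$ at $z=\rho$, and the singularity is not of square-root type. This is why Drmota's version assumes a positive characteristic point inside the region of convergence of $\mathbf{\Phi}$. It is also why citing Bell--Burris--Yeats supplies a condition to verify rather than a proof. For the entire (exponential) systems of this paper, the nonlinearity argument above suffices.

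Outside this step your route is sound and differs from the standard one. The paper does not prove this statement; it imports it from Drmota and Flajolet--Sedgewick. There the local analysis eliminates unknowns by the implicit function theorem, which is legitimate because every proper principal submatrix of the irreducible critical Jacobian has spectral radius $<1$. The scalar square-root schema is then applied to the single remaining equation. Your Lyapunov--Schmidt splitting along the Perron direction performs that reduction in one step. It also produces the explicit amplitudes $h_i(\rho)=c\,v_i$, proportional to the right Perron vector, which is exactly the positivity of the $c_i$ later used in Theorem~\ref{thm:thetaonehalf}.

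Two details should be made explicit. State that $\mathbf{w}_s(\rho,0)=\mathbf{0}$ (from differentiating the complementary equations), since both $G_{ss}=-\mathbf{u}^T\partial^2_{\mathbf{y}}\mathbf{\Phi}[\mathbf{v},\mathbf{v}]$ and $h_i(\rho)=c\,v_i$ rely on it. In the last stage, the strict entrywise inequality $|\mathbf{J}(z)|\ne\mathbf{J}(\rho)$ needs nonlinearity as well as aperiodicity, so that some entry of $\mathbf{J}$ genuinely contains an aperiodic $y_k$.
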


The system of equations of interest, $\mathbf{T}(z) = \mathbf{\Phi}(z, \mathbf{T}(z))$, is explicitly given by:$$\mathbf{T}(z) = \begin{pmatrix} T_0(z) \\ T_1(z) \end{pmatrix} = \begin{pmatrix} pz \exp(T_1(z)) \\ (1-p)z \exp(T_0(z)) \end{pmatrix}$$corresponding to Equations \eqref{eq:treeeqs}.

\begin{lemma}
\label{lem:bipartite_dlw_conditions}
The system $\mathbf{\Phi}$ as defined above is a-positive, a-proper, a-irreducible, and a-aperiodic for all $p \in (0,1)$.
\end{lemma}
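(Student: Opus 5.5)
The plan is to verify the four conditions of Definition~\ref{def:analytic_system} one at a time, directly on
\begin{equation*}
\Phi_0(z,\mathbf{y}) = pz\exp(y_1), \qquad \Phi_1(z,\mathbf{y}) = (1-p)z\exp(y_0).
\end{equation*}
\textbf{a-positivity} should be immediate. Expanding $\exp(y_j)=\sum_{m\ge 0} y_j^m/m!$ shows that each $\Phi_i$ is a power series in $(z,y_0,y_1)$ whose coefficients are products of $p$ or $1-p$ with factorial reciprocals. For $p\in(0,1)$ these are strictly positive wherever they are nonzero.

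For \textbf{a-properness} I will use the explicit factor $z$ in each component. Take two formal power series vectors $\mathbf{y},\mathbf{s}$ with $\operatorname{val}(\mathbf{y}-\mathbf{s})=v$. Then $\exp(y_1)-\exp(s_1) = \exp(s_1)\bigl(\exp(y_1-s_1)-1\bigr)$ has valuation at least $v$. Multiplying by $pz$ raises the valuation to at least $v+1$, and the same holds for the other component. Hence $d(\mathbf{\Phi}(\mathbf{y}),\mathbf{\Phi}(\mathbf{s})) \le \tfrac12 d(\mathbf{y},\mathbf{s})$, a contraction with $K=1/2<1$. This requires the inputs to be formal power series without constant term, which is exactly the setting of tree EGFs, so I will say so explicitly. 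I will also note, following the remark after Definition~\ref{def:analytic_system}, that the non-polynomial $\exp$ is permitted by the extension of Drmota and of Bell, Burris and Yeats.

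For \textbf{a-irreducibility}, the dependency graph has an edge $i\to j$ whenever $\partial\Phi_i/\partial y_j\not\equiv 0$. Here $\Phi_0$ depends on $y_1$ and $\Phi_1$ depends on $y_0$, since $p,1-p\neq 0$. The graph is therefore the 2-cycle $0\to1\to0$ (the structure of $W$), which is strongly connected. The boundary cases $p\in\{0,1\}$ would delete an edge, which is why the open interval is needed.

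The step I expect to need the most care is \textbf{a-aperiodicity}. The dependency graph itself has period $2$, mirroring the anti-diagonal $W$, so one might wrongly conclude the system is periodic. The definition in use, however, concerns the supports of $[z^n]T_i(z)$, so I will show combinatorially that $[z^n]T_0(z)>0$ and $[z^n]T_1(z)>0$ for every $n\ge1$. The witness is a rooted path of $n$ nodes whose labels alternate between $S_0$ and $S_1$, starting from a root in the prescribed part. This is a valid tree for the specification $\mathcal{T}_0=\mathcal{Z}_0\star\mathrm{SET}(\mathcal{T}_1)$, $\mathcal{T}_1=\mathcal{Z}_1\star\mathrm{SET}(\mathcal{T}_0)$, and it contributes a positive weight because $p,1-p>0$. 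As an analytic sanity check, iterating the system gives $T_0 = pz + p(1-p)z^2+\cdots$, so $1$ and $2$ both lie in the support and the gcd is $1$. Hence the period is exactly $1$ for both components, completing the lemma.
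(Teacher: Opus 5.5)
Your proposal is correct and follows essentially the same route as the paper: the same four checks, including the same $z$-factor valuation argument giving $K=1/2$ for a-properness. The only difference is the aperiodicity witness. You use an alternating rooted path of $n$ nodes, while the paper uses a root in $S_0$ with $n-1$ leaf children in $S_1$; both show $[z^n]T_i(z)>0$ for all $n\ge 1$.
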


\begin{proof}
We verify the four conditions of Definition \ref{def:analytic_system}:
\begin{itemize}
    \item \textbf{a-positive:} The Taylor expansion of the exponential function has strictly positive MacLaurin coefficients. Since $p \in (0,1)$, the partition probabilities $p$ and $1-p$ are strictly positive, guaranteeing that all components of the system have non-negative coefficients.
    
    \item \textbf{a-proper:} Let $\mathbf{T} = (T_0, T_1)$ and $\mathbf{S} = (S_0, S_1)$ be vectors in $\mathbb{C}[[z]]^2$. Let $V = \text{val}(\mathbf{T} - \mathbf{S})$ be their minimum valuation, implying $T_i - S_i = \mathcal{O}(z^V)$. Applying the transformation $\Phi_0(\mathbf{T}) = pz\exp(T_1)$ yields:
    \begin{equation*}
        \Phi_0(\mathbf{T}) - \Phi_0(\mathbf{S}) = pz \big(\exp(T_1) - \exp(S_1)\big).
    \end{equation*}
    Because the lowest order nonzero term of $(\exp(T_1) - \exp(S_1))$ is governed by $T_1 - S_1$, its valuation is at least $V$. Multiplying by the explicit $z$ factor shifts every term up by one degree, strictly increasing the valuation:
    \begin{equation*}
        \text{val}\big(\Phi_0(\mathbf{T}) - \Phi_0(\mathbf{S})\big) \ge V + 1.
    \end{equation*}
    By symmetry, this applies to $\Phi_1$ as well. Translating this back into the formal distance metric:
    \begin{equation*}
        d\big(\mathbf{\Phi}(\mathbf{T}), \mathbf{\Phi}(\mathbf{S})\big) \le 2^{-(V+1)} = \frac{1}{2} d(\mathbf{T}, \mathbf{S}).
    \end{equation*}
    The system satisfies the strict Lipschitz condition with $K = 1/2 < 1$, proving it is a contraction mapping and thus a-proper.
    
    \item \textbf{a-irreducible:} The functional dependency graph consists of the directed edges $T_0 \to T_1$ and $T_1 \to T_0$. This two-node graph is strongly connected.
    
    \item \textbf{a-aperiodic:} A valid bipartite tree can be constructed for any total integer size $n \ge 1$ (for example, a root node in $S_0$ connected to $n-1$ child nodes in $S_1$). Therefore, the support of the coefficient sequence $[z^n]T_i(z)$ contains all integers $n \ge 1$, meaning the period is exactly $1$.
\end{itemize}
\end{proof}

Having satisfied all four analytic conditions, the system falls under Theorem \ref{thm:dlw}, which guarantees that $T_0(z)$ and $T_1(z)$ share a unique dominant singularity $\rho$, with a universal square-root expansion:
\begin{equation}
    \begin{aligned}
        T_0(z) &\sim \tau_0 - c_0 \sqrt{1 - z/\rho}, \\
        T_1(z) &\sim \tau_1 - c_1 \sqrt{1 - z/\rho}.
    \end{aligned}
    \label{eq:DLWexpansions}
\end{equation}
where $\tau_i = T_i(\rho)$. As established in Section \ref{sec:multivariate_systems}, this unique tree singularity occurs when the spectral radius of the system's Jacobian matrix $\mathbf{J} = \left[ \frac{\partial \Phi_i}{\partial T_j} \right]$ is equal to $1$, or when $\det(\mathbf{I} - \mathbf{J}) = 0$. For our bipartite system, computing the partial derivatives yields:
\begin{equation*}
    \mathbf{J} = \begin{bmatrix} \frac{\partial \Phi_0}{\partial T_0} & \frac{\partial \Phi_0}{\partial T_1} \\ \frac{\partial \Phi_1}{\partial T_0} & \frac{\partial \Phi_1}{\partial T_1} \end{bmatrix} = \begin{bmatrix} 0 & pz\exp(T_1) \\ (1-p)z\exp(T_0) & 0 \end{bmatrix} = \begin{bmatrix} 0 & T_0(z) \\ T_1(z) & 0 \end{bmatrix}.
\end{equation*}
The determinant of $\mathbf{I} - \mathbf{J}$ is $1 - T_0(z)T_1(z)$. The dominant singularity $\rho$ occurs exactly when this determinant vanishes, forcing the boundary condition $1 - T_0(\rho)T_1(\rho) = 0$, yielding $\tau_0 \tau_1 = 1$. 

We observe that this tree singularity condition $1 - T_0 T_1 = 0$ is identical to the denominator in the EGFs $C(z)$ and $M(z)$. We prove in Section \ref{sec:d_set_generalization} that this spectral equivalence between the cycles and the trees is a universal property for any arbitrary dimension $d$.

\begin{theorem}
\label{thm:bipartite_log}
The cycle generating function $C(z)$ exhibits a universal logarithmic singularity with parameter $\theta = 1/2$, independent of the arbitrary partition proportion $p \in (0,1)$.
\end{theorem}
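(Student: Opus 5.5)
We plan to substitute the square-root expansions \eqref{eq:DLWexpansions} guaranteed by Theorem \ref{thm:dlw} (via Lemma \ref{lem:bipartite_dlw_conditions}) into the denominator $1-T_0(z)T_1(z)$ of Equation \eqref{eq:bipartite_C}. Writing $\varepsilon = \sqrt{1-z/\rho}$, we get
\begin{equation*}
T_0(z)T_1(z) = \tau_0\tau_1 - (\tau_0 c_1 + \tau_1 c_0)\,\varepsilon + O(\varepsilon^2).
\end{equation*}
The boundary condition $\tau_0\tau_1 = 1$ derived from $\det(\mathbf{I}-\mathbf{J})=0$ then gives
\begin{equation*}
1 - T_0(z)T_1(z) = \kappa\,\varepsilon\,\bigl(1+O(\varepsilon)\bigr), \qquad \kappa := \tau_0 c_1 + \tau_1 c_0 .
\end{equation*}
Here $\kappa > 0$, because $c_i = h_i(\rho) > 0$ by Theorem \ref{thm:dlw} and $\tau_i = T_i(\rho) > 0$ by positivity of the coefficients. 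Taking logarithms,
\begin{equation*}
C(z) = \tfrac{1}{2}\ln\frac{1}{1-z/\rho} - \ln\kappa + O\bigl(\sqrt{1-z/\rho}\bigr),
\end{equation*}
which is a logarithmic singularity with coefficient $\theta = 1/2$. No quantity depending on $p$ survives in the leading coefficient; $p$ enters only through $\rho$, $\tau_i$, $c_i$ and hence the constant term $-\ln\kappa$. This gives the claimed independence of $p$.

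Next we must check that $\rho$ really is the dominant singularity of $C$ and that $C$ is $\Delta$-analytic, so that Theorem \ref{thm:transfer} applies and yields $\lambda_k = [z^k]C(z)\rho^k \sim \frac{1}{2k}$. Since $T_0,T_1$ are analytic in $|z|<\rho$, the only other threat is a zero of $1-T_0T_1$ inside the disc. We would rule this out as follows. For $|z|<\rho$ we have $|T_0(z)T_1(z)| \le T_0(|z|)T_1(|z|)$. On the positive axis $T_0T_1$ is strictly increasing and equals $1$ exactly at $z=\rho$, so $T_0(|z|)T_1(|z|) < 1$ for $|z| < \rho$, and the denominator does not vanish there. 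On the boundary circle, a-aperiodicity combined with Theorem \ref{thm:dlw} gives that $\rho$ is the unique singularity of the $T_i$. At any other point $z$ with $|z|=\rho$, strict inequality in the triangle inequality holds because the supports are aperiodic, so $|T_0T_1| < 1$ and the denominator is again nonzero. Hence $C$ extends analytically to a $\Delta$-domain at $\rho$, using the DLW expansions there and analytic continuation of the $T_i$ elsewhere on the circle.

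The main obstacle is the nondegeneracy of the linear term, that is, ensuring that the $\varepsilon^1$ coefficient does not vanish. If it did, the singularity would be of order $\varepsilon^2$ and $\theta$ would become $1$. Positivity of $c_0$, $c_1$, $\tau_0$, $\tau_1$ settles this. As a cross-check, we would verify the symmetric case $p=1/2$ directly: there $T = \tfrac12 z e^{T}$, and the Cayley expansion $T \sim 1-\sqrt{2}\,\varepsilon$ gives $1-T^2 \sim 2\sqrt{2}\,\varepsilon$, again with exponent $1/2$.
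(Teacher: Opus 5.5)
Your proposal is correct and follows essentially the same route as the paper: you substitute the DLW square-root expansions into $1-T_0(z)T_1(z)$, use $\tau_0\tau_1=1$ from $\det(\mathbf{I}-\mathbf{J})=0$, and note that the linear coefficient $\tau_0c_1+\tau_1c_0$ is strictly positive, so taking logarithms gives $\theta=1/2$ with all $p$-dependence confined to the constant term. Your extra check that $1-T_0T_1$ has no zeros in $|z|\le\rho$ other than $z=\rho$ (via $|T_0(z)T_1(z)|<T_0(\rho)T_1(\rho)=1$ elsewhere on the closed disc) secures the $\Delta$-analyticity of $C(z)$ needed for the transfer theorem, which the paper leaves implicit.
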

\begin{proof}
To evaluate $C(z)$ near the dominant singularity $\rho$, we expand the denominator $1 - T_0(z)T_1(z)$ using equation (\ref{eq:DLWexpansions}):
\begin{align*}
    T_0(z)T_1(z) &\sim \left(\tau_0 - c_0\sqrt{1 - z/\rho}\right)\left(\tau_1 - c_1\sqrt{1 - z/\rho}\right) \\
    &\sim \tau_0 \tau_1 - (\tau_0 c_1 + \tau_1 c_0)\sqrt{1 - z/\rho} + \mathcal{O}\left(1 - z/\rho\right).
\end{align*}
By the a-positivity of the system, the singular evaluations $\tau_i = T_i(\rho)$ and the expansion coefficients $c_i$ are positive real numbers. Therefore, by defining the positive constant $K = \tau_0 c_1 + \tau_1 c_0 > 0$, the denominator reduces to a square-root singularity:
\begin{equation}
    1 - T_0(z)T_1(z) \sim K \sqrt{1 - \frac{z}{\rho}}.
\end{equation}
Substituting this asymptotic equivalence back into the cycle function defined in Equation \eqref{eq:bipartite_C}, we isolate the dominant singular term:
\begin{equation}
    C(z) = \ln\left(\frac{1}{K \sqrt{1 - z/\rho}}\right) = -\ln(K) + \frac{1}{2} \ln\left(\frac{1}{1 - z/\rho}\right) \sim \frac{1}{2} \ln\left(\frac{1}{1 - z/\rho}\right).
\end{equation}
Although the exact values of $\tau_i$, $c_i$, and $K$ depend on the partition proportion $p$, they are isolated into the constant $-\ln(K)$. Thus, the logarithmic parameter is $\theta = 1/2$ for all $p \in (0,1)$.
\end{proof}

\subsection{Probabilistic Limit Laws for Bipartite Mappings}
\label{sec:bipartite_limit}

With the logarithmic singularity of the cycle generating function established in Theorem \ref{thm:bipartite_log}, we directly apply the probabilistic framework defined in Section \ref{sec:probabilistic_framework}. Satisfying the logarithmic condition guarantees the convergence of both the small component counts and the macroscopic component sizes, governed by the standard limit theorems for logarithmic assemblies.

\begin{theorem}[{\cite[Theorem 3.2]{arratia2003logarithmic}}]
\label{thm:general_micro}
Let an assembly satisfy the logarithmic condition with parameter $\theta > 0$. Let $b = b(n)$ be a sequence of integers such that $b \to \infty$ and $b = o(n)$ as $n \to \infty$. The total variation distance between the joint distribution of the dependent component counts and the independent Poisson process converges to zero:
\begin{equation}
    d_{TV}\Big(\mathcal{L}(C_1^{(n)}, \dots, C_b^{(n)}), \mathcal{L}(Z_1, \dots, Z_b)\Big) \to 0
\end{equation}
where $Z_k \sim \operatorname{Po}(\lambda_k)$.
\end{theorem}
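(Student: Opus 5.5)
This is the Arratia–Barbour–Tavaré result for logarithmic assemblies. The plan follows their conditioning argument and combines the relation \eqref{eq:conditioning_relation} with a local limit theorem for the weighted sum $T_{0n}=\sum_{k=1}^n kZ_k$.

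\textbf{Step 1 (conditioning reduction).} Split $T_{0n}=T_{0b}+T_{bn}$, where $T_{0b}=\sum_{k\le b}kZ_k$ and $T_{bn}=\sum_{b<k\le n}kZ_k$. These two pieces are independent. By \eqref{eq:conditioning_relation}, for any vector $\mathbf{c}=(c_1,\dots,c_b)$ with $r=\sum_k kc_k$,
\begin{equation*}
\mathbb{P}\big((C_1^{(n)},\dots,C_b^{(n)})=\mathbf{c}\big)=\mathbb{P}\big((Z_1,\dots,Z_b)=\mathbf{c}\big)\,\frac{\mathbb{P}(T_{bn}=n-r)}{\mathbb{P}(T_{0n}=n)}.
\end{equation*}
The total variation distance therefore equals $\sum_{r\ge 0}\mathbb{P}(T_{0b}=r)\big(1-\mathbb{P}(T_{bn}=n-r)/\mathbb{P}(T_{0n}=n)\big)^+$. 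It suffices to show that this ratio tends to $1$ uniformly over $r\le n\varepsilon_n$ for some $\varepsilon_n\to 0$. The contribution from $r>n\varepsilon_n$ is negligible: by the logarithmic condition $\mathbb{E}T_{0b}=\sum_{k\le b}k\lambda_k\sim\theta b=o(n)$, so Markov's inequality applies once $\varepsilon_n$ is chosen to tend to zero more slowly than $b/n$.

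\textbf{Step 2 (local limit for $T_{bn}$ and $T_{0n}$).} This is the main obstacle. Since $k\lambda_k\to\theta$, the rescaled sum $n^{-1}T_{0n}$ converges in distribution to the law with density $p_\theta$ on $[0,\infty)$. This is the Dickman-type law with Laplace transform $\exp\big(-\theta\int_0^1(1-e^{-sx})x^{-1}\,dx\big)$, and convergence follows directly from Poisson Laplace transforms and Riemann sums. What is needed is the local version $n\,\mathbb{P}(T_{0n}=n)\to p_\theta(1)=e^{-\gamma\theta}/\Gamma(\theta)>0$. I would obtain it from the size-biasing identity valid for Poisson compound sums:
\begin{equation*}
m\,\mathbb{P}(T_{0n}=m)=\sum_{k=1}^n k\lambda_k\,\mathbb{P}(T_{0n}=m-k).
\end{equation*}
Replacing $k\lambda_k$ by $\theta+o(1)$ turns this into a discrete analogue of the integral equation $xp_\theta(x)=\theta\int_{x-1}^x p_\theta(u)\,du$. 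A Gronwall/induction argument then controls the discretization error and gives uniform local convergence on compact subsets of $(0,\infty)$. The same identity, applied to $T_{bn}$, shows that $n\,\mathbb{P}(T_{bn}=n-r)\to p_\theta(1)$ uniformly in $r\le n\varepsilon_n$. This uses $b=o(n)$: removing the first $b$ summands changes the law by an amount whose effect on the point probabilities vanishes after scaling by $n$, since $\mathbb{E}T_{0b}/n\to 0$ and $p_\theta$ is continuous near $1$.

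\textbf{Step 3 (conclude).} Combining the two local limits, the ratio in Step 1 tends to $p_\theta(1)/p_\theta(1)=1$ uniformly over $r\le n\varepsilon_n$. The positive part of $1-\text{ratio}$ is bounded by $1$, so the total variation distance tends to zero. In our application the logarithmic condition itself comes from Theorem \ref{thm:transfer} applied to the singularity of Theorem \ref{thm:bipartite_log}. The delicate part is the uniform local estimate in Step 2. Only convergence of $k\lambda_k$ is assumed, with no rate, so the error terms must be handled by a splitting argument: small $k$ contribute $o(n)$ to the convolution, and large $k$ are governed by $\theta$.
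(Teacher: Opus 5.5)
The paper gives no proof of its own here; the statement is imported from Arratia--Barbour--Tavar\'e. Your outline is essentially their argument: the conditioning relation, reduction of the total variation distance to an average over the law of $T_{0b}$, Markov's inequality for $r>n\varepsilon_n$, and a local limit theorem obtained from the Poisson size-biasing identity. So the route is the same and its architecture is sound.

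The one thing to change is the mechanism you propose for Step 2. The Gronwall/induction comparison with the integral equation is unnecessary. It is also hard to run, because an induction anchored near the origin has no usable starting data. The point probabilities of $T_{0n}$ at $j=o(n)$ are governed by the first few $\lambda_k$ (for instance $\mathbb{P}(T_{0n}=1)/\mathbb{P}(T_{0n}=0)=\lambda_1$), so $n\,\mathbb{P}(T_{0n}=j)$ need not track $p_\theta(j/n)$ there.

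Instead, feed the weak limit directly into the identity. Write $X_\theta$ for a variable with density $p_\theta$. You have $n^{-1}T_{bn}\Rightarrow X_\theta$, and every index in $T_{bn}$ has $k>b\to\infty$, so $\sup_{k>b}|k\lambda_k-\theta|\to 0$. Hence
\[
n\,\mathbb{P}(T_{bn}=n-r)=\frac{n}{n-r}\sum_{k=b+1}^{n}k\lambda_k\,\mathbb{P}(T_{bn}=n-r-k)=\theta\,\mathbb{P}(T_{bn}\le n-r-b-1)+o(1)\longrightarrow\theta\,\mathbb{P}(X_\theta\le 1)=p_\theta(1)
\]
uniformly in $r\le n\varepsilon_n$, because distribution functions converge uniformly to a continuous limit.

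Similarly, $n\,\mathbb{P}(T_{0n}=n)=\theta\,\mathbb{P}(T_{0n}<n)+E_n$ with
\[
|E_n|\le\max_{k\le K}|k\lambda_k-\theta|\,\mathbb{P}(n-K\le T_{0n}<n)+\sup_{k>K}|k\lambda_k-\theta|.
\]
The first term is $o(1)$ for each fixed $K$; this, rather than ``$o(n)$'', is the correct form of your splitting remark. The second term is small for large $K$. Therefore $n\,\mathbb{P}(T_{0n}=n)\to p_\theta(1)$, and the ratio in Step 1 tends to $1$.
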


\begin{theorem}[{\cite[Theorems 6.8 and 6.12]{arratia2003logarithmic}}]
\label{thm:general_macro}
Let $L_1^{(n)} \ge L_2^{(n)} \ge \cdots$ denote the sizes of the largest connected components of an assembly of size $n$, ordered decreasingly. If the assembly satisfies the logarithmic condition with parameter $\theta > 0$, the joint distribution of the normalized component sizes converges in distribution to the Poisson-Dirichlet distribution $\mathcal{PD}(\theta)$:
\begin{equation}
    n^{-1}\left( L_1^{(n)}, L_2^{(n)}, \dots \right) \xrightarrow{d} \mathcal{PD}(\theta) \quad \text{as} \quad n \to \infty.
\end{equation}
\end{theorem}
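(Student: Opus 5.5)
The plan is to prove convergence of the \emph{size-biased} ordering of component sizes to the GEM$(\theta)$ stick-breaking law first, and then pass to the decreasing order statistics. Since $\mathcal{PD}(\theta)$ is by definition the law of the decreasing rearrangement of GEM$(\theta)$, and ranking is continuous in the $\ell^1$ topology on the simplex, this transfers directly to the statement of Theorem~\ref{thm:general_macro}. Throughout, the only input is the conditioning relation~\eqref{eq:conditioning_relation} together with the logarithmic condition $k\lambda_k\to\theta$ of Definition~\ref{def:logarithmic_condition}.

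\textbf{Step 1: a local limit theorem for the conditioning variable.} Set $T_{n} = \sum_{k=1}^{n} kZ_k$. I would show
\begin{equation*}
\mathbb{P}(T_n = m) = \frac{1}{n}\, p_\theta\!\left(\frac{m}{n}\right)\,(1+o(1))
\end{equation*}
uniformly for $m/n$ in compact subsets of $(0,1]$. Here $p_\theta$ is the density of the limit $X_\theta$ of $T_n/n$. The argument has three parts.
\begin{itemize}
\item[(a)] Prove $T_n/n \xrightarrow{d} X_\theta$ by computing Laplace transforms. We have $\mathbb{E}e^{-sT_n/n} = \exp\bigl(-\sum_{k\le n} \lambda_k (1-e^{-sk/n})\bigr)$, and since $\lambda_k\approx\theta/k$ this converges to $\exp\bigl(-\theta\int_0^1 (1-e^{-sx})\,dx/x\bigr)$.
\item[(b)] Upgrade to a local statement using the size-biasing identity $m\,\mathbb{P}(T_n=m) = \sum_{k} k\lambda_k\, \mathbb{P}(T_n = m-k)$. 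This is a Poisson identity.
\item[(c)] Because $k\lambda_k\to\theta$, this identity is a discretised form of the integral equation $x\,p_\theta(x) = \theta\int_{x-1}^{x} p_\theta(u)\,du$. From it, $n\,\mathbb{P}(T_n=\lfloor xn\rfloor)$ is equicontinuous in $x$, and (a) identifies the limit.
\end{itemize}
In particular $\mathbb{P}(T_n=n)\sim p_\theta(1)/n$ with $p_\theta(1) = e^{-\gamma\theta}/\Gamma(\theta)>0$.

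\textbf{Step 2: the first size-biased component.} Pick a uniform element of the size-$n$ assembly and let $S_1$ be the size of its component. By~\eqref{eq:conditioning_relation} and independence of the $Z_k$,
\begin{equation*}
\mathbb{P}(S_1 = m) = \frac{m\lambda_m\, \mathbb{P}(T_{n}^{(m)} = n-m)}{n\,\mathbb{P}(T_n = n)}.
\end{equation*}
Here $T_n^{(m)}$ differs from $T_n$ by a single shifted $Z_m$, which does not affect the local estimate for $m$ of order $n$. Inserting $m\lambda_m\to\theta$ and Step 1 gives
\begin{equation*}
n\,\mathbb{P}(S_1=\lfloor xn\rfloor)\to \theta\,\frac{p_\theta(1-x)}{p_\theta(1)}\cdot(1-x)^{-1}\cdot(\text{scaling}),
\end{equation*}
which one checks equals $\theta(1-x)^{\theta-1}$, the Beta$(1,\theta)$ density. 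Tightness near $x=0$ and $x=1$ follows from crude bounds $\mathbb{P}(T_n=m)=O(1/n)$, which are available from the same identity.

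\textbf{Step 3: iteration.} Conditional on $S_1=m$, the remaining components form an assembly of size $n-m$ governed by the same conditioning relation, and the logarithmic condition is preserved. Repeating Step 2 gives that $(S_1/n, S_2/n,\dots,S_r/n)$ converges to the first $r$ GEM$(\theta)$ coordinates for every $r$. Since the residual mass after $r$ steps tends to $0$ in probability uniformly in $n$ as $r\to\infty$, this gives convergence in $\ell^1$ and hence of the ranked sizes $n^{-1}(L_1^{(n)},L_2^{(n)},\dots)$ to $\mathcal{PD}(\theta)$.

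I expect the main obstacle to be Step 1(b)--(c): the local limit theorem with uniformity down to the boundary. The logarithmic condition alone gives only $k\lambda_k=\theta+o(1)$, so the error terms in the discretised integral equation must be controlled by a summation argument rather than pointwise. I would first try a Gronwall-type bound on $\sup_m n|\mathbb{P}(T_n=m)-n^{-1}p_\theta(m/n)|$, exploiting the boundedness of $k\lambda_k$. If that proves too delicate, I would fall back on citing the corresponding lemma of \cite{arratia2003logarithmic} directly.
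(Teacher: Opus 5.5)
The paper gives no argument for this statement: it imports the result from Arratia--Barbour--Tavar\'e, where it is derived from a local limit theorem for $T_n=\sum_{k\le n}kZ_k$. Your proposal is a correct self-contained proof. What distinguishes it is that you pass from the LLT to the ranked sizes through the size-biased permutation.

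The regeneration in Step 3 is exact. The product form in \eqref{eq:conditioning_relation} together with $(c_m+1)\mathbb{P}(Z_m=c_m+1)=\lambda_m\mathbb{P}(Z_m=c_m)$ shows that, once the first size-biased component (of size $m$) is removed, the remaining counts have precisely the law of $(C_1^{(n-m)},\dots,C_{n-m}^{(n-m)})$, with the same $\lambda_k$.

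The payoff of your route is that you only need the LLT on $(0,1]$. There it reads $x\,p_\theta(x)=\theta\,\mathbb{P}(X_\theta<x)$, so $p_\theta(x)\propto x^{\theta-1}$ and the $\mathrm{Beta}(1,\theta)$ law is a one-line ratio; the constant $e^{-\gamma\theta}/\Gamma(\theta)$ is never needed, only its positivity. The alternative is to read the ranked sizes directly off the conditioning relation, e.g.\ via $\mathbb{P}(L_1^{(n)}\le m)=e^{-\sum_{m<k\le n}\lambda_k}\,\mathbb{P}(T_m=n)/\mathbb{P}(T_n=n)$. That requires $p_\theta$ at $n/m>1$, where it is only defined through the Dickman-type integral equation. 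In exchange, that route gives local limits for the ranked sizes themselves.

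Three details need repair, none fatal.

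(i) Step 1(c) is easier than you fear. The identity gives $\mathbb{P}(T_n=j)\le c/j$ with $c=\sup_k k\lambda_k$. Splitting $\sum_k(k\lambda_k-\theta)\mathbb{P}(T_n=m-k)$ at a fixed $K$ then yields $m\,\mathbb{P}(T_n=m)=\theta\,\mathbb{P}(m-n\le T_n<m)+o(1)$ uniformly in $m\ge\epsilon n$. Step (a) then gives the LLT immediately, with no Gronwall argument.

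(ii) For Poisson $Z_m$, your $T_n^{(m)}$ has exactly the law of $T_n$, and the Step 2 limit is simply $\theta\,p_\theta(1-x)/p_\theta(1)=\theta(1-x)^{\theta-1}$. The factor $(1-x)^{-1}\cdot(\text{scaling})$ in your display does not belong there. It conflates $\mathbb{P}(T_n=n-m)\approx p_\theta(1-x)/n$ with $\mathbb{P}(T_{n-m}=n-m)\approx p_\theta(1)/(n-m)$, which differ by the factor $e^{-\sum_{n-m<k\le n}\lambda_k}\approx(1-x)^\theta$.

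(iii) The uniform bound $\mathbb{P}(T_n=m)=O(1/n)$ is false for $\theta<1$, which includes the paper's case $\theta=1/2$: indeed $\mathbb{P}(T_n=0)=e^{-\sum_{k\le n}\lambda_k}=n^{-\theta+o(1)}$. The bound holds only for $m\ge\epsilon n$, which is enough for tightness near $x=0$. Near $x=1$, use $\mathbb{P}(T_n\le\delta n)\to\mathbb{P}(X_\theta\le\delta)$ from (a). Alternatively, skip the tightness step altogether: pointwise convergence of $n\,\mathbb{P}(S_1=\lfloor xn\rfloor)$ to a probability density already gives convergence by Scheff\'e's lemma.
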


To invoke these limit theorems for bipartite random mappings, we must prove the mapping satisfies the logarithmic condition.

\begin{proposition}
\label{pro:bipartite_log_cond}
The component counting process of a bipartite random mapping with arbitrary partition proportion $p \in (0,1)$ satisfies the logarithmic condition with parameter $\theta = 1/2$.
\end{proposition}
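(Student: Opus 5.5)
The plan is to show that $k\lambda_k \to 1/2$ with $\lambda_k = [z^k]C(z)\rho^k$, where $C(z)$ is the cycle generating function of Proposition \ref{prop:bipartite_gfs}. The route is the logarithmic form of the transfer theorem (Theorem \ref{thm:transfer}), applied to the singular expansion from Theorem \ref{thm:bipartite_log}. The argument has three steps.

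\textbf{Step 1: exact decomposition near $\rho$.} Theorem \ref{thm:bipartite_log} already gives $1 - T_0(z)T_1(z) \sim K\sqrt{1-z/\rho}$ with $K>0$. Since an asymptotic equivalence is not enough for a transfer argument, I would upgrade it to an exact factorisation. By Theorem \ref{thm:dlw}, $T_i(z) = g_i(z) - h_i(z)\sqrt{1-z/\rho}$ with $g_i,h_i$ analytic at $\rho$. Hence
\[
1 - T_0T_1 = \sqrt{1-z/\rho}\,\bigl(K + \sqrt{1-z/\rho}\,R(z)\bigr)
\]
for some $R$ that is analytic in $\sqrt{1-z/\rho}$; this uses the boundary condition $g_0(\rho)g_1(\rho)=\tau_0\tau_1=1$. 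Taking logarithms gives
\[
C(z) = \tfrac12 \ln\frac{1}{1-z/\rho} - \ln K + \mathcal{O}\bigl(\sqrt{1-z/\rho}\bigr).
\]

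\textbf{Step 2: $\Delta$-analyticity.} This is the main obstacle: the transfer theorem needs $C$ to be analytic in a $\Delta$-domain at $\rho$. Two facts are required. First, by the a-aperiodicity in Lemma \ref{lem:bipartite_dlw_conditions}, Theorem \ref{thm:dlw} gives that $T_0,T_1$ have $\rho$ as their unique dominant singularity and extend to a $\Delta$-domain. Second, $1 - T_0(z)T_1(z) \neq 0$ on $|z|\le\rho$, $z\neq\rho$, and in a small $\Delta$-neighbourhood beyond. For the second fact I would argue as follows. For $|z|\le \rho$, the coefficients of $T_0T_1$ are nonnegative and the product is aperiodic; since $z^2$ times a unit has support on all large integers, this gives the strict triangle inequality $|T_0(z)T_1(z)| < T_0(\rho)T_1(\rho) = 1$ for $|z|=\rho$, $z\ne\rho$. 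For $|z|<\rho$ the same bound follows from monotonicity. Compactness then extends the non-vanishing to a thin $\Delta$-domain, outside a small disc around $\rho$. Within that disc, the factorisation of Step 1 shows the only zero is at $\rho$.

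\textbf{Step 3: transfer.} With these facts, Theorem \ref{thm:transfer} applies. The term $-\ln K$ contributes nothing to the coefficients for $k\ge1$. The error term $\mathcal{O}(\sqrt{1-z/\rho})$ transfers to $\mathcal{O}(k^{-3/2}\rho^{-k})$. Therefore $[z^k]C(z) = \frac{1}{2}\frac{\rho^{-k}}{k}(1+o(1))$, so $k\lambda_k \to 1/2$, which is the logarithmic condition of Definition \ref{def:logarithmic_condition} with $\theta = 1/2$. Independence of $p$ follows because $p$ enters only through $\rho$ and $K$, neither of which affects the limit.
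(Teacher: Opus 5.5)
Your proposal is correct and takes the same route as the paper. The paper's proof simply feeds the expansion $C(z)\sim\frac12\ln\frac{1}{1-z/\rho}$ from Theorem \ref{thm:bipartite_log} into Theorem \ref{thm:transfer} to obtain $\lambda_k\sim\frac{1}{2k}$. Your Steps 1--2 (the exact factorisation of $1-T_0T_1$, the bound $|T_0(z)T_1(z)|<1$ for $|z|\le\rho$, $z\neq\rho$, and the resulting $\Delta$-analyticity of $C$) verify hypotheses the paper uses without checking, so they tighten the argument rather than change it. One phrase is imprecise: ``$z^2$ times a unit'' does not by itself give the strict triangle inequality; what does is that $T_0T_1=p(1-p)z^2e^{T_0+T_1}$ has strictly positive coefficients in every degree $n\ge2$.
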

\begin{proof}
From Equation \eqref{eq:poisson_expected}, the expected value of the unconditioned Poisson component counts is given by $\lambda_k = [z^k]C(z)\rho^k$. By Theorem \ref{thm:bipartite_log}, the cycle function admits the singular expansion $C(z) \sim \frac{1}{2} \ln\left(\frac{1}{1 - z/\rho}\right)$. Applying Theorem \ref{thm:transfer} yields:
\begin{equation*}
    \lambda_k \sim \frac{1}{2k}.
\end{equation*}
Multiplying by $k$ and taking the limit as $k \to \infty$ evaluates the asymptotic intensity:
\begin{equation*}
    \lim_{k \to \infty} k \lambda_k = \frac{1}{2}.
\end{equation*}
This confirms the assembly satisfies Definition \ref{def:logarithmic_condition} with $\theta = 1/2$.
\end{proof}

Because the bipartite mapping is a logarithmic assembly, the probabilistic limit laws follow directly from Proposition \ref{pro:bipartite_log_cond} and the fundamental theorems for assemblies (Theorems \ref{thm:general_micro} and \ref{thm:general_macro}). We give them next as corollaries.

\begin{corollary}
\label{cor:bipartite_micro}
For a bipartite random mapping, the initial segment of components decouples, converging to the independent Poisson process where $\lambda_k \sim \frac{1}{2k}$.
\end{corollary}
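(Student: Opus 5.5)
The corollary follows by feeding Proposition \ref{pro:bipartite_log_cond} into Theorem \ref{thm:general_micro}. The plan has three steps.

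First, I will fix the probabilistic model. The component counts $C_k^{(n)}$ of a uniformly random bipartite mapping of size $n$ form an assembly. It is built from the cyclic components enumerated by $C(z)$ in Proposition \ref{prop:bipartite_gfs}, combined via the $\text{SET}$ construction, since $M(z)=\exp(C(z))$. The conditioning relation \eqref{eq:conditioning_relation} therefore holds with independent $Z_k \sim \operatorname{Po}(\lambda_k)$, where $\lambda_k = [z^k]C(z)\rho^k$ and $\rho$ is the common dominant singularity of $T_0, T_1$ given by Theorem \ref{thm:dlw}.

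Second, I will verify the hypothesis of Theorem \ref{thm:general_micro}. Proposition \ref{pro:bipartite_log_cond} gives $k\lambda_k \to 1/2$, so the assembly is logarithmic with $\theta = 1/2 > 0$. Applying Theorem \ref{thm:general_micro} with any $b=b(n)\to\infty$ and $b=o(n)$ yields
\begin{equation*}
d_{TV}\Big(\mathcal{L}(C_1^{(n)},\dots,C_b^{(n)}),\mathcal{L}(Z_1,\dots,Z_b)\Big)\to 0 .
\end{equation*}
This is precisely the decoupling of the initial segment of small components into an independent Poisson process. The stated intensity $\lambda_k \sim \frac{1}{2k}$ is read off from the proof of Proposition \ref{pro:bipartite_log_cond}. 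That proof uses the transfer Theorem \ref{thm:transfer} on $C(z)\sim \frac12\ln\frac{1}{1-z/\rho}$, and the expansion is valid because the a-aperiodicity in Lemma \ref{lem:bipartite_dlw_conditions} makes $\rho$ the unique singularity on the circle of convergence.

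Third, I will address the main point of care, which is that the transfer theorem requires $\Delta$-analyticity of $C(z)$ at $\rho$. For this I need $1-T_0(z)T_1(z)\neq 0$ on $|z|\le\rho$ apart from $z=\rho$, and I need it to hold in a $\Delta$-domain. I would argue as follows.
\begin{itemize}
\item For $|z|\le\rho$, the triangle inequality gives $|T_0(z)T_1(z)|\le T_0(|z|)T_1(|z|)\le \tau_0\tau_1=1$.
\item Equality with value $1$ would force $|z|=\rho$ and all coefficients aligned in phase. Aperiodicity rules this out unless $z=\rho$.
\item Near $\rho$, the square-root expansion shows that the only zero of $1-T_0T_1$ is at $\rho$.
\end{itemize}
Since the argument is a direct combination of the earlier results, the only nontrivial step is this $\Delta$-analyticity check.
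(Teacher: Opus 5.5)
Your proposal is correct and follows the paper's route: the paper also obtains this corollary by feeding the logarithmic condition with $\theta = 1/2$ from Proposition~\ref{pro:bipartite_log_cond} into Theorem~\ref{thm:general_micro}. Your additional $\Delta$-analyticity check for $1 - T_0(z)T_1(z)$ (triangle inequality, aperiodicity of $T_0T_1$, and the local square-root expansion with $K>0$) supplies rigor that the paper leaves implicit when it applies the transfer theorem in the proof of that proposition.
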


\begin{corollary}
\label{cor:bipartite_macro}
For a bipartite random mapping with arbitrary $p \in (0,1)$, the joint distribution of the normalized macroscopic component sizes converges to the Poisson-Dirichlet distribution:
\begin{equation}
    n^{-1}\left( L_1^{(n)}, L_2^{(n)}, \dots \right) \xrightarrow{d} \mathcal{PD}(1/2) .
\end{equation}
\end{corollary}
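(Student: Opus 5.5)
The corollary follows by combining Proposition \ref{pro:bipartite_log_cond} with Theorem \ref{thm:general_macro}. The plan is to check that the bipartite random mapping is a combinatorial assembly in the sense of Section \ref{sec:probabilistic_framework}, so that the conditioning relation \eqref{eq:conditioning_relation} holds. Theorem \ref{thm:general_macro} then gives the Poisson-Dirichlet limit with the parameter $\theta$ supplied by the logarithmic condition.

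\textbf{Step 1: assembly structure.} By Proposition \ref{prop:bipartite_gfs}, $M(z)=\exp(C(z))$. So a bipartite mapping is a $\text{SET}$ of connected cyclic components, each a cycle of alternating planted trees. The components are therefore labelled building blocks combined by the exponential $\text{SET}$ operator.

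\textbf{Step 2: conditioning relation.} Here the weights matter, and I expect this to be the main obstacle. The EGF uses the marks $pz$ and $(1-p)z$, so the random model must be read as a weighted (Boltzmann-type) assembly: each labelled structure is weighted by $p^{|S_0|}(1-p)^{|S_1|}$. One could instead fix the part sizes exactly at $\lfloor pn\rfloor$ and $n-\lfloor pn\rfloor$. In that case I would note that the component-size distribution is multiplicative in this weighting, so \eqref{eq:conditioning_relation} still holds with $\lambda_k=[z^k]C(z)\rho^k$. The justification is that the probability of a configuration factors as a product over components of their weighted counts, divided by $c!$ for repeated sizes. This is exactly the form that produces the independent Poisson variables conditioned on $\sum kZ_k=n$, with the normalizing constant $\rho^n/[z^n]M(z)$ absorbed by the conditioning.

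\textbf{Step 3: logarithmic condition.} Proposition \ref{pro:bipartite_log_cond} gives $k\lambda_k\to 1/2$. That result rests on Theorem \ref{thm:bipartite_log} together with the transfer Theorem \ref{thm:transfer}. The transfer theorem applies because Lemma \ref{lem:bipartite_dlw_conditions} and Theorem \ref{thm:dlw} (aperiodicity) make $\rho$ the unique dominant singularity. This in turn guarantees analytic continuation of $C(z)$ to a $\Delta$-domain, since $1-T_0T_1$ does not vanish elsewhere on $|z|=\rho$. That non-vanishing follows from the triangle inequality $|T_0T_1|<\tau_0\tau_1=1$ off the positive axis, using aperiodicity.

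\textbf{Step 4: apply the limit theorem.} With the assembly satisfying the logarithmic condition with $\theta=1/2$, Theorem \ref{thm:general_macro} gives $n^{-1}(L_1^{(n)},L_2^{(n)},\dots)\xrightarrow{d}\mathcal{PD}(1/2)$. The constants $\tau_i$, $c_i$ and $K$ depend on $p$, but only through the additive constant $-\ln K$ in $C(z)$. Hence the limit is the same for every $p\in(0,1)$.
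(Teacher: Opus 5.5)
Your main line is the paper's own argument: the corollary is Proposition \ref{pro:bipartite_log_cond} fed into Theorem \ref{thm:general_macro}. Steps 1, 3 and 4 correctly supply what the paper leaves implicit:
\begin{itemize}
\item the $\text{SET}$ structure $M=\exp(C)$;
\item the $\Delta$-continuation of $C(z)$, via $|T_0(z)T_1(z)|<\tau_0\tau_1=1$ for $|z|=\rho$, $z\neq\rho$, which the paper never checks before transferring;
\item the fact that $p$ enters only through constants.
\end{itemize}
In the weighted model, where each coloured labelled structure carries weight $p^{|S_0|}(1-p)^{|S_1|}$, the conditioning relation \eqref{eq:conditioning_relation} holds exactly, because that weight is multiplicative over components.

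The aside in Step 2 about fixing $|S_0|=\lfloor pn\rfloor$ is wrong, however, and that fixed-size model is the Hansen--Jaworski setting. With $|S_0|=m$ fixed, the factor $p^m(1-p)^{n-m}$ is constant and irrelevant. What matters is that the number of mappings with prescribed components depends on how many $S_0$-atoms each component contains.

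Let $c_{a,b}$ be the number of connected bipartite mappings on $a$ labelled $S_0$-points and $b$ labelled $S_1$-points. Then the counts $C_{a,b}$ are distributed as independent $Z_{a,b}\sim\operatorname{Po}\bigl(c_{a,b}x^ay^b/(a!\,b!)\bigr)$, for any $x,y>0$, conditioned on the \emph{two} constraints $\sum aZ_{a,b}=m$ and $\sum bZ_{a,b}=n-m$. This is the weighted model further conditioned on $|S_0|=m$, not a relation of the form \eqref{eq:conditioning_relation}.

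Concretely, take $n=4$ and $\lfloor pn\rfloor=1$. Every bipartite mapping is then connected, since all of $S_1$ maps to the single $S_0$-point. Yet \eqref{eq:conditioning_relation} with $\lambda_2=p(1-p)\rho^2>0$ gives positive probability to two components of size $2$.

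Matching parameters does not close the gap either. In the weighted model $P(|S_0|=m)\propto\binom{n}{m}p^m(1-p)^{n-m}m^{n-m}(n-m)^m$. This concentrates at $\alpha^*(p)\,n$, with $\alpha^*(p)$ strictly between $1/2$ and $p$ when $p\neq 1/2$ (e.g.\ $\alpha^*(0.9)\approx 0.63$). So in that model $p$ is a weight, not the proportion of $S_0$-nodes.

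Your proof, like the paper's, therefore establishes the corollary for the weighted model only. For fixed part sizes you would need an extra argument: either a genuinely bivariate analysis, or a proof that conditioning on a typical value of $|S_0|$ does not affect the largest components. The aside should be removed or replaced by such an argument.
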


\begin{remark}
    Corollary \ref{cor:bipartite_macro} provides an alternative analytic-probabilistic proof of the macroscopic limit originally established by Hansen and Jaworski \cite{hansen2000large} for arbitrary partition distributions.
\end{remark}

While the limits established in this section fully resolve the strictly bipartite case, they inherently rely on the transition matrix operating as a two-dimensional anti-diagonal operator. However, many of the applied functional graphs introduced in Section \ref{sec:introduction} fall into non-bipartite cases involving a broader number of partitions or varying transition constraints. Having established the fundamental analytic and probabilistic mechanics here, we now generalize this framework. The limit laws for non-bipartite, arbitrary $d$-set mappings with globally connected (irreducible) transitions are handled in Section \ref{sec:d_set_generalization}, while the more complex reducible mappings that fragment into distinct communicating classes are fully addressed in Section \ref{sec:reducible_matrices}. 

\section{Generalization to Arbitrary $d$-Set Mappings}
\label{sec:d_set_generalization}

Having established the mechanics for the bipartite case, we now generalize the framework to functional graphs partitioned into an arbitrary number of sets. 

Let $V$ be a finite set of $n$ elements, partitioned into $d$ disjoint subsets $S_1, S_2, \dots, S_d$, and let $p_i \in (0,1)$ denote the proportion of nodes in $S_i$. We observe that $\sum_{i=1}^d p_i = 1$. 

We define a $d$-partite random mapping $f: V \to V$ with transition probabilities defined by an irreducible $d \times d$ stochastic matrix $\mathbf{W}$. The entries $w_{ij}$ represent the probability that an element in partition $S_i$ maps to an element in partition $S_j$:
\begin{equation}
    w_{ij} = \frac{|\{v \in S_i \mid f(v) \in S_j\}|}{|S_i|} = \frac{|S_i \cap f^{-1}(S_j)|}{|S_i|}.
\end{equation}

\begin{remark}
We restrict our initial analysis to irreducible transition matrices $\mathbf{W}$ to guarantee that the associated functional dependency graph is strongly connected. This strict topological condition is required to satisfy the \textit{a-irreducible} property of the Drmota-Lalley-Woods theorem. The mechanics of relaxing this condition for reducible matrices—by decomposing the graph and analyzing the dominant communicating classes—is addressed in Section \ref{sec:reducible_matrices}.
\end{remark}

\subsection{DLW Theorem in $d$-Dimensions}

The symbolic construction of the component trees expands naturally into a system of $d$ interdependent equations. Let $\mathbf{T}(z) = (T_1(z), T_2(z), \dots, T_d(z))^T$ be the vector of exponential generating functions for the trees rooted in each respective partition. The system is defined by the transformation $\mathbf{T}(z) = \mathbf{\Phi}(z, \mathbf{T}(z))$, where the $i$-th component is:
\begin{equation}
    T_i(z) = p_i z \exp\left( \sum_{j=1}^d w_{ij} T_j(z) \right).
\end{equation}

Defining the diagonal partition matrix $\mathbf{P} = \operatorname{diag}(p_1, p_2, \dots, p_d)$, the system of equations can be expressed in vector form using the element-wise exponential:
\begin{equation}
\label{eq:systemofdequations}
\mathbf{T}(z) = z \mathbf{P} \exp(\mathbf{W} \mathbf{T}(z)).
\end{equation}

\begin{lemma}
\label{lem:d_set_dlw}
Let $\mathbf{W}$ be an irreducible $d \times d$ stochastic matrix. The $d$-variate analytic system $\mathbf{\Phi}$ is a-positive, a-proper, a-irreducible, and a-aperiodic for any valid partition distribution $(p_1, \dots, p_d)$.
\end{lemma}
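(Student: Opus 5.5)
We check the four conditions of Definition \ref{def:analytic_system} in turn, following the template of Lemma \ref{lem:bipartite_dlw_conditions}, with the $2\times 2$ anti-diagonal matrix replaced by a general irreducible stochastic $\mathbf{W}$.

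\textbf{a-positivity.} Each component $\Phi_i(z,\mathbf{y}) = p_i z \exp\left(\sum_j w_{ij} y_j\right)$ is the composition of $\exp$, which has positive Maclaurin coefficients, with a linear form whose coefficients $w_{ij}\ge 0$. It is then multiplied by $p_i z$ with $p_i>0$. Expanding $\exp\left(\sum_j w_{ij}y_j\right) = \prod_j \exp(w_{ij}y_j)$ as a product of series with non-negative coefficients shows that every coefficient in $z$ and $\mathbf{y}$ is non-negative.

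\textbf{a-properness.} We reuse the valuation argument from the bipartite case. For $\mathbf{T},\mathbf{S}\in\mathbb{C}[[z]]^d$ with $V=\operatorname{val}(\mathbf{T}-\mathbf{S})$, the linear combination $\sum_j w_{ij}(T_j - S_j)$ has valuation at least $V$. Hence $\exp\left(\sum_j w_{ij}T_j\right)-\exp\left(\sum_j w_{ij}S_j\right)$ also has valuation at least $V$: factor out $\exp\left(\sum_j w_{ij}S_j\right)$ and write the remainder as $\exp(u)-1$ with $\operatorname{val}(u)\ge V$. The explicit factor $z$ raises this to $V+1$. Therefore $d(\mathbf{\Phi}(\mathbf{T}),\mathbf{\Phi}(\mathbf{S}))\le \tfrac12 d(\mathbf{T},\mathbf{S})$, which is the required contraction with $K=1/2$.

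\textbf{a-irreducibility.} The dependency graph has an edge $T_i\to T_j$ exactly when $\partial\Phi_i/\partial y_j = p_i z\, w_{ij}\exp(\cdots)$ is not identically zero, that is, exactly when $w_{ij}>0$. Since $p_i>0$, this graph is the support graph of $\mathbf{W}$, which is strongly connected because $\mathbf{W}$ is irreducible.

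\textbf{a-aperiodicity.} This is the only step that needs a new idea, since a general $\mathbf{W}$ could a priori impose a periodic structure on trees. We handle it with explicit constructions, as in the bipartite case. Because $\mathbf{W}$ is stochastic, every row $i$ has some $j$ with $w_{ij}>0$, and therefore $\Phi_i$ contains the monomial $z\, y_j^{m}$ for every $m\ge 0$ with positive coefficient $p_i w_{ij}^m/m!$. Substituting $T_j = p_j z + O(z^2)$, which is nonzero since $p_j>0$, shows that $T_i$ contains $p_i z \cdot (w_{ij}p_j z)^{m}/m!$ plus non-negative further contributions. Because all terms are non-negative there is no cancellation, so $[z^{m+1}]T_i>0$ for every $m\ge 0$. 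Combinatorially, this is a root in $S_i$ with $m$ leaf children in $S_j$. Thus every $n\ge1$ lies in the support of each $T_i$, so the period is $1$.

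One point needs care. In this generalized model the weights $p_i$ and $w_{ij}$ are used as real parameters rather than as counts, so the ``support'' must be read as strict positivity of the coefficients. The non-negativity established under a-positivity is exactly what guarantees that no cancellation can destroy the coefficients produced in this construction.
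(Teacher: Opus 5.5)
Your proof is correct and follows the paper's own argument. You make the same four checks, use the same valuation shift by the factor $z$ to get $K=1/2$, inherit irreducibility from the support graph of $\mathbf{W}$, and obtain aperiodicity because the exponential produces every power of its argument. Your explicit star-tree bound $[z^{m+1}]T_i \ge p_i (w_{ij}p_j)^m/m! > 0$ simply makes rigorous the one-line aperiodicity claim in the paper.
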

\begin{proof}
We verify the four analytic conditions of Definition \ref{def:analytic_system}:
\begin{itemize}
    \item \textbf{a-positive:} Since the partition proportions $p_i$ and the matrix weights $w_{ij}$ are non-negative, and the MacLaurin series of the exponential function has strictly positive coefficients, all components $\Phi_i$ possess non-negative coefficients.
    \item \textbf{a-proper:} Let $\mathbf{T}, \mathbf{S} \in \mathbb{C}[[z]]^d$ with minimum formal valuation $V = \text{val}(\mathbf{T} - \mathbf{S})$. Evaluating the difference of the operator yields:
    \begin{equation*}
        \Phi_i(\mathbf{T}) - \Phi_i(\mathbf{S}) = p_i z \left( \exp\Big( \sum_{j=1}^d w_{ij} T_j \Big) - \exp\Big( \sum_{j=1}^d w_{ij} S_j \Big) \right).
    \end{equation*}
    The lowest order non-zero term of the difference of the exponentials has a valuation of at least $V$. Multiplication by the independent variable $z$ shifts the degree, yielding $\text{val}(\mathbf{\Phi}(\mathbf{T}) - \mathbf{\Phi}(\mathbf{S})) \ge V + 1$. Translating to the formal distance metric, $d(\mathbf{\Phi}(\mathbf{T}), \mathbf{\Phi}(\mathbf{S})) \le \frac{1}{2} d(\mathbf{T}, \mathbf{S})$, proving the system is a contraction mapping.
    \item \textbf{a-irreducible:} This is satisfied by assumption. 
    \item \textbf{a-aperiodic:} The exponential function generates all non-negative powers of its argument. Consequently, the support of the coefficient sequence $[z^n]T_i(z)$ contains all integers $n \ge 1$, establishing a sequence period of exactly $1$.
\end{itemize}
\end{proof}

By the DLW theorem and its analytic extension, the system $\mathbf{T}(z)$ possesses a unique dominant singularity $\rho$ on its circle of convergence. Furthermore, there exist functions $h_i$ analytic at the origin such that in a neighborhood of $\rho$:
\begin{equation}
    \mathbf{T}(z) = \left(h_1\left(\sqrt{1-z/\rho}\right), \dots, h_d\left(\sqrt{1-z/\rho}\right)\right)^T,
\end{equation}
where each component admits the singular expansion $T_i(z) \sim \tau_i - c_i\sqrt{1-z/\rho}$, with $\tau_i = T_i(\rho)$.

\subsection{The Universal Logarithmic Singularity}

We now construct the $d \times d$ transfer matrix $\mathbf{A}(z) =  \operatorname{diag}(\mathbf{T}(z))\mathbf{W}$. Under the transfer matrix method, the cycle generating function $C(z)$ is given by:
\begin{equation}
    C(z) = \sum_{k=1}^\infty \frac{\operatorname{Tr}(\mathbf{A}(z)^k)}{k} = -\ln\Big(\det(\mathbf{I} - \mathbf{A}(z))\Big).
\end{equation}

To extract the asymptotic behavior of the macroscopic cycles, we must evaluate the roots of the determinant $\det(\mathbf{I} - \mathbf{A}(z))$. However, the singular limit of the underlying component trees is governed by a different matrix: the Jacobian $\mathbf{J}$ of Equation (\ref{eq:systemofdequations}). To guarantee that the cycle EGF $C(z)$ and the tree EGFs $T_i(z)$ share the same dominant singularity $\rho$, we must establish an equivalence.

\begin{lemma}
\label{lem:jacobianequivalence}
The dominant singularity $\rho$ of the analytic system is the unique positive real value $z$ that satisfies the equation $\det(\mathbf{I} - \mathbf{J}) = 0$, where $\mathbf{J} = \operatorname{diag}(\mathbf{T}(z)) \mathbf{W}$ is the Jacobian matrix of the system.
\end{lemma}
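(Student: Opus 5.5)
The plan has three parts: compute the Jacobian explicitly, invoke the spectral characterization of Section~\ref{sec:multivariate_systems} for existence, and use Perron--Frobenius monotonicity for uniqueness.

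\textbf{Step 1: the Jacobian is the transfer matrix.} Differentiating $\Phi_i(z,\mathbf{T}) = p_i z \exp\big(\sum_j w_{ij}T_j\big)$ with respect to $T_j$ gives
\begin{equation*}
\frac{\partial \Phi_i}{\partial T_j} = w_{ij}\, p_i z \exp\Big(\sum_{l} w_{il} T_l\Big) = w_{ij}\, T_i(z),
\end{equation*}
where the last equality evaluates on the solution of \eqref{eq:systemofdequations}. Hence $\mathbf{J} = \operatorname{diag}(\mathbf{T}(z))\mathbf{W} = \mathbf{A}(z)$, exactly as in the bipartite computation. So the determinants governing $C(z)$ and the tree singularity coincide.

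\textbf{Step 2: existence of a root at $\rho$.} By Lemma~\ref{lem:d_set_dlw} and Theorem~\ref{thm:dlw}, the $T_i$ share a common dominant singularity $\rho$ on the positive real axis, with $\tau_i = T_i(\rho) < \infty$. If $\mathbf{I}-\mathbf{J}(\rho)$ were invertible, Theorem~\ref{thm:maift}, applied to $\mathbf{F}(z,\mathbf{y}) = \mathbf{y} - \mathbf{\Phi}(z,\mathbf{y})$ at $(\rho,\boldsymbol{\tau})$, would extend $\mathbf{T}$ analytically past $\rho$. That contradicts Pringsheim's theorem, so $\det(\mathbf{I}-\mathbf{J}(\rho)) = 0$. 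Equivalently, following Bell--Burris--Yeats \cite{bell2011characteristic}, the Perron root $\lambda(z)$ of $\mathbf{J}(z)$ satisfies $\lambda(z) < 1$ on $(0,\rho)$ and $\lambda(\rho) = 1$.

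\textbf{Step 3: uniqueness, the main obstacle.} On $(0,\rho]$, $\mathbf{J}(z) = \operatorname{diag}(\mathbf{T}(z))\mathbf{W}$ is non-negative and irreducible, because $\mathbf{W}$ is irreducible and every $T_i(z) > 0$. Each $T_i$ has non-negative coefficients with $[z^1]T_i = p_i > 0$, so it is strictly increasing on $(0,\rho]$. Thus $\mathbf{J}(z) \le \mathbf{J}(z')$ entrywise for $z < z'$, with strict inequality on every nonzero entry. By the strict monotonicity of the Perron root for irreducible non-negative matrices (\cite[Note V.34]{flajolet2009analytic}), $\lambda(z)$ is strictly increasing.

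This yields uniqueness at the level of the spectral radius, but $\det(\mathbf{I}-\mathbf{J}) = 0$ only says that $1$ is \emph{some} eigenvalue. For $0 < z < \rho$, every eigenvalue has modulus at most $\lambda(z) < 1$, so $\det(\mathbf{I}-\mathbf{J}(z)) \neq 0$ there. Hence $\rho$ is the smallest positive root. For $z > \rho$, the power series $\mathbf{T}(z)$ diverges, so $\mathbf{J}$ is not defined along the analytic branch. The uniqueness claim must therefore be read on the domain $(0,\rho]$ where the solution exists, and I would state this restriction explicitly.

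As a fallback for the Perron argument, I would use the fact that $\mathbf{I}-\mathbf{J}$ is a nonsingular M-matrix whenever $\lambda < 1$. This gives $\det(\mathbf{I}-\mathbf{J}) > 0$ on $(0,\rho)$ and vanishing exactly at $\rho$.
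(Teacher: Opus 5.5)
Your proposal is correct and follows essentially the paper's route: you compute $\partial\Phi_i/\partial T_j = w_{ij}T_i$ to get $\mathbf{J}=\operatorname{diag}(\mathbf{T})\mathbf{W}$, then combine the analytic implicit function theorem with the Bell--Burris--Yeats spectral characterization (Perron root $<1$ on $(0,\rho)$ and $=1$ at $\rho$) to locate $\rho$. Your Step 3 is more careful than the paper, which simply calls the spectral radius reaching $1$ ``equivalent'' to $\det(\mathbf{I}-\mathbf{J})=0$: your observations that every eigenvalue has modulus at most $\lambda(z)<1$ on $(0,\rho)$, and that uniqueness only makes sense on $(0,\rho]$ where $\mathbf{T}$ converges, are what make the ``unique positive root'' claim precise.
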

\begin{proof}
The Jacobian of the system of equations $\mathbf{T}=\mathbf{\Phi}(z,\mathbf{T})$ is the matrix $\mathbf{J}$ where $[\mathbf{J}]_{ij}=\frac{\partial \Phi_i}{\partial T_j}$. By the chain rule, we obtain
\begin{equation*}
    \frac{\partial \Phi_i}{\partial T_j} = \frac{\partial}{\partial T_j} \left[ z p_i \exp\left( \sum_{k=1}^d w_{ik} T_k \right) \right] = z p_i \exp\left( \sum_{k=1}^d w_{ik} T_k \right) w_{ij} = T_i(z) w_{ij}.
\end{equation*}
In matrix notation, multiplying each entry $w_{ij}$ by the component $T_i$ corresponds exactly to scaling the rows of the transition matrix $\mathbf{W}$ by the elements of $\mathbf{T}$. Thus the Jacobian is $\mathbf{J} = \operatorname{diag}(\mathbf{T}(z)) \mathbf{W}$. We also note that $w_{ij}$ and $T_i(z)$ are non-negative, thus $\mathbf{J}$ is also a non-negative matrix.

Now define the implicit system $\mathbf{F}(z, \mathbf{T}) = \mathbf{T}-\mathbf{\Phi}(z,\mathbf{T})=\mathbf{T} - z \mathbf{P} \exp(\mathbf{W} \mathbf{T}) = \mathbf{0}$. By Theorem \ref{thm:maift}, the solution vector $\mathbf{T}(z)$ remains analytic at a point $z$ as long as the Jacobian matrix of $\mathbf{F}$ with respect to $\mathbf{T}$ is invertible. 

The derivative of the implicit system is $\frac{\partial \mathbf{F}}{\partial \mathbf{T}} = \mathbf{I} - \mathbf{J}$. As established in Section \ref{sec:multivariate_systems}, while the implicit function theorem guarantees analyticity when this matrix is invertible, the converse holds for non-negative analytic systems \cite{bell2011characteristic}: the breakdown of analyticity at the dominant singularity $\rho$ occurs exactly when the spectral radius of the non-negative Jacobian $\mathbf{J}$ reaches $1$. This is equivalent to the linear operator $\mathbf{I} - \mathbf{J}$ losing invertibility, yielding the boundary equation $\det(\mathbf{I} - \mathbf{J}) = 0$.
\end{proof}

The structural identity between the transfer matrix $\mathbf{A}(z)$ and the Jacobian $\mathbf{J}$ guarantees they share the same determinant, establishing the equivalence:
\begin{equation}
    \det(\mathbf{I} - \mathbf{A}(z)) = \det(\mathbf{I} - \operatorname{diag}(\mathbf{T})\mathbf{W}) = \det(\mathbf{I} - \mathbf{J}).
\end{equation}
We define this as $D(\mathbf{T}) = \det(\mathbf{I} - \mathbf{J})$.

\begin{theorem}
\label{thm:thetaonehalf}
For any $d$-partite random mapping governed by an irreducible transition matrix $\mathbf{W}$, the cycle EGF $C(z)$ exhibits a universal logarithmic singularity with parameter $\theta = 1/2$.
\end{theorem}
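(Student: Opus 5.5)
We follow the bipartite template of Theorem \ref{thm:bipartite_log}, replacing the explicit product $T_0T_1$ by the determinant $D(\mathbf{T}(z))=\det(\mathbf{I}-\operatorname{diag}(\mathbf{T}(z))\mathbf{W})$. By the transfer-matrix identity above, $C(z)=-\ln D(\mathbf{T}(z))$. It therefore suffices to show that, near $\rho$,
\[
D(\mathbf{T}(z)) = K\sqrt{1-z/\rho} + \mathcal{O}(1-z/\rho), \qquad K>0 .
\]
Once this holds, $C(z) = -\ln K + \tfrac12 \ln\frac{1}{1-z/\rho} + o(1)$, which is the claim with $\theta=1/2$. By Lemma \ref{lem:jacobianequivalence}, $D(\boldsymbol{\tau})=0$ with $\boldsymbol{\tau}=\mathbf{T}(\rho)$.

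The plan is to Taylor-expand $D$, a polynomial in the $T_i$, around $\boldsymbol{\tau}$, and to substitute the DLW expansions $T_i(z)=\tau_i - c_i\sqrt{1-z/\rho}+\mathcal{O}(1-z/\rho)$ supplied by Lemma \ref{lem:d_set_dlw} and Theorem \ref{thm:dlw}. This gives
\[
D(\mathbf{T}(z)) = -\sum_i \frac{\partial D}{\partial T_i}(\boldsymbol{\tau})\, c_i \sqrt{1-z/\rho} + \mathcal{O}(1-z/\rho),
\]
so $K=-\nabla D(\boldsymbol{\tau})\cdot\mathbf{c}$. The crux, and the main obstacle, is proving $K>0$, i.e.\ that the first-order term does not cancel in dimension $d$.

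To prove $K>0$, we use Perron--Frobenius on the irreducible non-negative matrix $\mathbf{J}(\rho)=\operatorname{diag}(\boldsymbol{\tau})\mathbf{W}$. It is irreducible because every $\tau_i>0$ and $\mathbf{W}$ is irreducible. Its spectral radius is $1$, and the eigenvalue $1$ is simple, with positive right eigenvector $\mathbf{v}$ and positive left eigenvector $\mathbf{u}$. By Jacobi's formula,
\[
\partial_{T_i} D = -\operatorname{Tr}\!\bigl(\operatorname{adj}(\mathbf{I}-\mathbf{J})\,\partial_{T_i}\mathbf{J}\bigr).
\]
At a simple eigenvalue, $\operatorname{adj}(\mathbf{I}-\mathbf{J}(\rho))=\gamma\,\mathbf{v}\mathbf{u}^T$ with $\gamma=\prod_{\lambda\neq 1}(1-\lambda)$ over the remaining eigenvalues. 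Since $\partial_{T_i}\mathbf{J}=\mathbf{e}_i\mathbf{e}_i^T\mathbf{W}$, we get
\[
-\partial_{T_i}D(\boldsymbol{\tau}) = \gamma\,u_i(\mathbf{W}\mathbf{v})_i = \gamma\, u_i v_i/\tau_i ,
\]
using $\operatorname{diag}(\boldsymbol{\tau})\mathbf{W}\mathbf{v}=\mathbf{v}$. This is positive provided $\gamma>0$. For $\gamma$: every other eigenvalue $\lambda$ of $\mathbf{J}(\rho)$ has $|\lambda|\le 1$ and $\lambda\neq1$, so real ones give $1-\lambda>0$ and complex ones pair as $|1-\lambda|^2>0$. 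Combined with $c_i>0$ (i.e.\ $h_i(\rho)>0$ from Theorem \ref{thm:dlw}), this yields $K=\gamma\sum_i u_iv_ic_i/\tau_i>0$.

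As a cross-check, differentiating $\mathbf{T}=z\mathbf{P}\exp(\mathbf{W}\mathbf{T})$ shows that $\mathbf{c}$ is proportional to $\mathbf{v}$. If time permits, we would also confirm that $D(\mathbf{T}(z))$ has no other zeros on $|z|=\rho$. This follows because $\rho$ is the unique dominant singularity under a-aperiodicity, and because $|T_i(z)|<\tau_i$ elsewhere on the circle, so the spectral radius there is strictly less than $1$. Hence Theorem \ref{thm:transfer} applies in a $\Delta$-domain.
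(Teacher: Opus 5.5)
Your proposal is correct and follows the paper's proof essentially step for step. It uses the same first-order Taylor expansion of $D$ at $\bm{\tau}$, the same substitution of the DLW square-root expansions with $c_i>0$, and the same Jacobi's-formula argument with a rank-one Perron adjugate to get $-\partial D/\partial T_i(\bm{\tau})>0$. Your explicit formula $-\partial_{T_i}D(\bm{\tau})=\gamma\,u_iv_i/\tau_i$ with $\gamma=\prod_{\lambda\neq1}(1-\lambda)>0$ (valid under the normalization $\mathbf{u}^T\mathbf{v}=1$; the sign does not depend on it) also fills a small gap in the paper: the paper infers positivity of the adjugate from positive eigenvectors alone, without fixing the sign of the scalar factor.
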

\begin{proof}
By Lemma \ref{lem:jacobianequivalence}, as $\mathbf{T}(z)$ approaches the singular limit $\bm{\tau} = (\tau_1, \dots, \tau_d)^T$, the spectral radius of the Jacobian reaches $1$, forcing the determinant $D(\bm{\tau}) = 0$. We evaluate $D(\mathbf{T}(z))$ near the singularity using a first-order multivariate Taylor expansion:
\begin{equation*}
    D(\mathbf{T}(z)) \sim \sum_{i=1}^d \frac{\partial D}{\partial T_i}(\bm{\tau}) \big(T_i(z) - \tau_i\big).
\end{equation*}
Substituting the expansion $T_i(z) - \tau_i \sim -c_i\sqrt{1-z/\rho}$, we extract the dominant asymptotic behavior:
\begin{equation*}
    D(\mathbf{T}(z)) \sim \left( -\sum_{i=1}^d c_i \frac{\partial D}{\partial T_i}(\bm{\tau}) \right) \sqrt{1-z/\rho}.
\end{equation*}
By the monotonic growth of the combinatorial generating functions $T_i(z)$ along the positive real axis, the singular expansion coefficients $c_i$ must be positive. 

To prove that $\frac{\partial D}{\partial T_i}(\tau) < 0$, we apply Jacobi's formula, which defines the derivative of the determinant as:
\begin{equation*}
    \frac{\partial D}{\partial T_i}(\tau) = \operatorname{Tr}\left( \operatorname{adj}(\mathbf{I}-\mathbf{J}) \frac{\partial(\mathbf{I}-\mathbf{J})}{\partial T_i} \right).
\end{equation*}
By the assumed irreducibility of $\mathbf{W}$, the Jacobian $\mathbf{J}$ evaluated at the singularity $\tau$ is a non-negative, irreducible matrix with a spectral radius of 1. By the Perron-Frobenius theorem \cite[Note V.34]{flajolet2009analytic}, 1 is a simple dominant eigenvalue, making the eigenvalue 0 of $\mathbf{I}-\mathbf{J}$ simple. Consequently, the adjugate matrix $\operatorname{adj}(\mathbf{I}-\mathbf{J})$ is a rank-1 matrix constructed from the outer product of the strictly positive left and right Perron eigenvectors, guaranteeing every entry is strictly positive.

Since $\mathbf{J} = \operatorname{diag}(\mathbf{T})\mathbf{W}$, the partial derivative $\frac{\partial(\mathbf{I}-\mathbf{J})}{\partial T_i} = -\frac{\partial \mathbf{J}}{\partial T_i}$ yields a matrix containing the negative transition weights $-w_{ik}$ strictly along its $i$-th row, and zeros elsewhere. Multiplying the strictly positive adjugate matrix by this derivative matrix yields a product with strictly negative entries along its diagonal for all non-zero $w_{ik}$. Because $\mathbf{W}$ is irreducible, each row contains at least one positive weight, guaranteeing the matrix trace is strictly negative. Therefore, $\frac{\partial D}{\partial T_i}(\tau) < 0$ and, $K := -\sum_{i=1}^d c_i \frac{\partial D}{\partial T_i}(\bm{\tau})>0$.

Substituting $D(\mathbf{T}(z)) \sim K\sqrt{1-z/\rho}$ into the cycle EGF, we isolate the dominant singular term:
\begin{equation}
    C(z) = -\ln\Big(D(\mathbf{T}(z))\Big) \sim -\ln\left(K \sqrt{1 - z/\rho}\right) = -\ln(K) + \frac{1}{2} \ln\left(\frac{1}{1 - z/\rho}\right).
\end{equation}
The scalar term $-\ln(K)$ is asymptotically negligible, leaving the singularity entirely governed by $\frac{1}{2} \ln\left(\frac{1}{1 - z/\rho}\right)$. Thus, the parameter evaluates universally to $\theta = 1/2$ for all valid dimensions $d$ and irreducible configurations $\mathbf{W}$. 
\end{proof}

\section{Reducible Matrices and Communicating Classes}
\label{sec:reducible_matrices}
In Theorem \ref{thm:thetaonehalf}, the universal parameter $\theta = 1/2$ is fundamentally reliant on the transition matrix $\mathbf{W}$ being irreducible. When $\mathbf{W}$ is reducible, its associated dependency graph is no longer strongly connected, meaning the functional graph fragments into distinct terminal and transient communicating classes. We first evaluate the two reducible cases in dimension $d=2$, and then treat the general $d$ case.

\subsection{Reducible Boundaries in Two Dimensions}

\begin{lemma}
Suppose a $2$-set mapping is entirely disconnected, meaning elements only map within their own sets ($f(S_0) \subseteq S_0$ and $f(S_1) \subseteq S_1$). If the sets are perfectly symmetric ($p = 1/2$), the component sizes shift into the $\mathcal{PD}(1)$ universality class.
\end{lemma}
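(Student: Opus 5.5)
With $\mathbf{W} = \mathbf{I}$ the tree system decouples into two independent scalar Cayley equations. Each partition is a standard random mapping, and the cycle generating function becomes a sum of two logarithms with the same singularity. I would follow the template of Theorem \ref{thm:bipartite_log} and Proposition \ref{pro:bipartite_log_cond}, replacing the irreducible DLW analysis by the scalar one applied twice.

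\textbf{Step 1 (decoupled system).} With $w_{00}=w_{11}=1$ and $w_{01}=w_{10}=0$, equation \eqref{eq:systemofdequations} becomes
\begin{equation*}
T_0(z) = pz\exp(T_0(z)), \qquad T_1(z) = (1-p)z\exp(T_1(z)).
\end{equation*}
Hence $T_0(z)=T(pz)$ and $T_1(z)=T((1-p)z)$, where $T$ is the Cayley tree function. $T$ has singularity $e^{-1}$ with $T(w)\sim 1-\sqrt{2}\sqrt{1-ew}$. So $T_0$ is singular at $\rho_0 = 1/(ep)$ and $T_1$ at $\rho_1 = 1/(e(1-p))$.

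\textbf{Step 2 (cycle EGF).} The transfer matrix $\mathbf{A}(z)=\operatorname{diag}(T_0,T_1)$ is diagonal, so $\det(\mathbf{I}-\mathbf{A}) = (1-T_0)(1-T_1)$. This gives
\begin{equation*}
C(z) = \ln\frac{1}{1-T_0(z)} + \ln\frac{1}{1-T_1(z)}.
\end{equation*}
This reflects that a component lies entirely in $S_0$ or entirely in $S_1$.

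\textbf{Step 3 (symmetric case).} At $p=1/2$ we have $\rho_0=\rho_1=\rho=2/e$. Near $\rho$ each summand satisfies $1-T_i(z)\sim \sqrt{2}\sqrt{1-z/\rho}$, so each contributes $\tfrac12\ln\frac{1}{1-z/\rho}$ plus a constant. The contributions add to
\begin{equation*}
C(z)\sim \ln\frac{1}{1-z/\rho}.
\end{equation*}
The scalar function $T$ has $e^{-1}$ as its unique dominant singularity on the circle of convergence and extends to a $\Delta$-domain, so Theorem \ref{thm:transfer} applies. It yields $\lambda_k=[z^k]C(z)\rho^k\sim 1/k$, so $k\lambda_k\to 1$. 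Definition \ref{def:logarithmic_condition} therefore holds with $\theta=1$, and Theorem \ref{thm:general_macro} gives convergence of the normalized component sizes to $\mathcal{PD}(1)$.

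\textbf{Main obstacle.} The difficulty is justifying that the two logarithmic singularities genuinely coincide and add, rather than one dominating. If $p\neq 1/2$, say $p>1/2$, then $\rho_0<\rho_1$. The smaller singularity dominates, and at $\rho_0$ the $T_1$ term is analytic, so $\theta$ would revert to $1/2$. This is precisely why the hypothesis $p=1/2$ is needed, and I would state that contrast explicitly.

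A secondary subtlety is interpretive. The paper weights nodes by $p_i z$, a Poissonized device for fixed proportions. I would argue that the conditioning relation \eqref{eq:conditioning_relation} applies to this weighted assembly exactly as in Section \ref{sec:bipartite_limit}, so no new probabilistic input is needed beyond the logarithmic condition.
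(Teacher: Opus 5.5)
Your singularity analysis is the paper's proof. You decouple into $T(pz)$ and $T((1-p)z)$, read off the diagonal determinant $(1-T_0)(1-T_1)$, note that $1/(ep)$ and $1/(e(1-p))$ coincide at $2/e$ exactly when $p=1/2$, and add two half-logarithms to get $\theta=1$. Your explicit constant $\sqrt{2}$, the $\Delta$-analyticity remark and the transfer to $k\lambda_k\to 1$ correctly fill in steps the paper leaves implicit.

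The gap is in your last paragraph. Weighting nodes by $p_i z$ is \emph{not} a Poissonized device for fixed proportions, and for this lemma the difference changes the answer. Suppose $|S_0|=|S_1|=n/2$ is fixed. Then $f$ is a pair of independent uniform random mappings on $n/2$ points. Hence $n^{-1}(L_1^{(n)},L_2^{(n)},\dots)$ converges to the decreasing rearrangement of $\tfrac12 V\cup\tfrac12 V'$, with $V,V'$ independent $\mathcal{PD}(1/2)$. In particular $L_1^{(n)}\le n/2$ always, whereas under $\mathcal{PD}(1)$ the largest part exceeds $1/2$ with probability $\ln 2$. So under the fixed-split reading the lemma is false. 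Equivalently, the fixed-split model corresponds to conditioning the Poisson counts on two constraints, one per set, not on the single constraint $\sum_k kZ_k=n$ that you invoke.

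What your computation does prove is the statement for the weighted assembly: a pair (coloring, compatible mapping) gets weight $p^{|S_0|}(1-p)^{|S_1|}$. At $p=1/2$ this is the uniform measure on such pairs, and $|S_0|$ is random. The number of pairs with $|S_0|=j$ is $\binom{n}{j}j^j(n-j)^{n-j}\sim n!\,e^n/\bigl(2\pi\sqrt{j(n-j)}\bigr)$ for $j,n-j\to\infty$. So $|S_0|/n$ is asymptotically arcsine, i.e.\ $\mathrm{Beta}(1/2,1/2)$. By additivity of the gamma subordinator, merging $BV$ with $(1-B)V'$ for an independent $B\sim\mathrm{Beta}(1/2,1/2)$ gives exactly $\mathcal{PD}(1)$, consistent with your $\theta=1$.

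The paper's proof is silent on this point. To make your argument sound, define ``$p=1/2$'' as this weighted model and drop the fixed-proportion interpretation. The same observation explains your $p\neq 1/2$ contrast: for $p>1/2$ the weighted model has $|S_1|$ bounded in probability, so it is essentially a single random mapping.
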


\begin{proof}
The transition matrix is the identity matrix, $\mathbf{W} = \begin{bmatrix} 1 & 0 \\ 0 & 1 \end{bmatrix}$. The system of tree generating functions decouples into independent Cayley structures:
\begin{align}
    T_0(z) &= pz \exp(T_0(z)), \\
    T_1(z) &= (1-p)z \exp(T_1(z)).
\end{align}
Computing the Jacobian matrix $\mathbf{J} = \operatorname{diag}(\mathbf{T}(z))\mathbf{W}$ yields a purely diagonal operator:
\begin{equation*}
    \mathbf{J} = \begin{bmatrix} T_0(z) & 0 \\ 0 & T_1(z) \end{bmatrix}.
\end{equation*}
We have $D(\mathbf{T}(z)) = \det(\mathbf{I} - \mathbf{J}) = (1 - T_0(z))(1 - T_1(z))$. 

Notice that the decoupled equations $T_0(z) = pz \exp(T_0(z))$ and $T_1(z) = (1-p)z \exp(T_1(z))$ are scaled instances of the standard Cayley tree function $T(x) = x \exp(T(x))$. Since the dominant singularity of the standard Cayley tree occurs exactly at $x = e^{-1}$, the individual trees $T_0(z)$ and $T_1(z)$ possess distinct singularities at $\rho_0 = \frac{1}{pe}$ and $\rho_1 = \frac{1}{(1-p)e}$, respectively.

If the partition is uneven ($p \neq 1/2$), the component with the larger proportion possesses the smaller singular radius ($\rho_0 < \rho_1$ when $p > 1/2$). This dominant tree reaches its singularity first, dictating the overall radius of convergence of the system. 

However, if the sets are perfectly symmetric ($p = 1/2$), their singularities coincide ($\rho_0 = \rho_1 = 2/e = \rho$). Since both trees reach their $1 - \tau_i = 0$ boundaries simultaneously, we substitute their square-root expansions $T_i(z) \sim 1 - c_i\sqrt{1-z/\rho}$ into the determinant:
\begin{equation*}
    D(\mathbf{T}(z)) \sim \left(c_0\sqrt{1 - z/\rho}\right) \left(c_1\sqrt{1 - z/\rho}\right) = c_0 c_1 \left(1 - \frac{z}{\rho}\right).
\end{equation*}
When evaluating the cycle generating function, the multiplication of the two square roots yields a purely linear singularity:
\begin{equation}
    C(z) = -\ln\Big(D(\mathbf{T}(z))\Big) \sim -\ln\left(c_0 c_1 \left(1 - \frac{z}{\rho}\right)\right) \sim \ln\left(\frac{1}{1-z/\rho}\right).
\end{equation}
The logarithmic parameter shifts from $\theta = 1/2$ to $\theta = 1$. The component sizes are no longer governed by the $\mathcal{PD}(1/2)$ distribution, shifting instead into the $\mathcal{PD}(1)$ universality class.
\end{proof}

\begin{lemma}
Suppose a 2-set mapping possesses a single absorbing state. Either $S_1$ maps into $S_0$ while $S_0$ maps to itself ($f(S_1) \subseteq S_0$ and $f(S_0) \subseteq S_0$), or symmetrically, $S_0$ maps strictly into $S_1$ while $S_1$ maps to itself ($f(S_0) \subseteq S_1$ and $f(S_1) \subseteq S_1$). In either configuration, the functional graph preserves the square-root singularity and the $\mathcal{PD}(1/2)$ limit law.
\end{lemma}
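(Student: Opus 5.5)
By the symmetry $S_0 \leftrightarrow S_1$, $p \leftrightarrow 1-p$, we treat only the first configuration, $f(S_1)\subseteq S_0$ and $f(S_0)\subseteq S_0$. The transition matrix is then
\[
\mathbf{W}=\begin{bmatrix}1&0\\1&0\end{bmatrix},
\]
which is reducible with the single terminal class $\{S_0\}$. The tree system \eqref{eq:systemofdequations} becomes triangular:
\[
T_0(z)=pz\exp(T_0(z)), \qquad T_1(z)=(1-p)z\exp(T_0(z)).
\]
Hence $T_0$ is a scaled Cayley tree, $T_0(z)=T(pz)$, and $T_1(z)=\frac{1-p}{p}\,T_0(z)$ is just a constant multiple of it.

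The first step is to locate the singularity. The Cayley tree $T(x)$ has its unique dominant singularity at $x=e^{-1}$ with $T(e^{-1})=1$ and the expansion $T(x)\sim 1-\sqrt{2}\sqrt{1-ex}$, with no other singularity on its circle of convergence. Therefore $T_0$ and $T_1$ share the single dominant singularity $\rho=\frac{1}{pe}$, where
\[
T_0(z)\sim 1-\sqrt{2}\sqrt{1-z/\rho}.
\]
Unlike in the disconnected case, no second independent square root can appear, since $T_1$ contributes no singularity of its own.

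The second step is to compute the Jacobian determinant. From $\mathbf{J}=\operatorname{diag}(\mathbf{T})\mathbf{W}$ we get
\[
\mathbf{J}=\begin{bmatrix}T_0&0\\T_1&0\end{bmatrix},
\]
which is lower triangular, so
\[
D(\mathbf{T})=\det(\mathbf{I}-\mathbf{J})=1-T_0(z).
\]
The transfer-matrix identity $C(z)=-\ln\det(\mathbf{I}-\mathbf{A}(z))$ from Proposition \ref{prop:bipartite_gfs} relies only on the trace/determinant identity, not on irreducibility, so it still applies. Combinatorially this is also clear: every cycle lies entirely in $S_0$, because nodes of $S_1$ are never images of anything. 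Consequently $C(z)=\ln\frac{1}{1-T_0(z)}$. Substituting the expansion of $T_0$ gives
\[
C(z)\sim -\ln\sqrt{2}+\tfrac12\ln\frac{1}{1-z/\rho},
\]
so $\theta=1/2$.

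The final step is to apply Theorem \ref{thm:transfer}. Since $\rho$ is the unique singularity on the circle and $C$ is $\Delta$-analytic there (inherited from $T$), we obtain $k\lambda_k\to 1/2$. This is the logarithmic condition of Definition \ref{def:logarithmic_condition}, and Theorem \ref{thm:general_macro} then yields the $\mathcal{PD}(1/2)$ limit.

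The only delicate point is justifying $\Delta$-analyticity and the uniqueness of the dominant singularity, because the DLW theorem (Theorem \ref{thm:dlw}) does not apply to this reducible system. This is settled by the explicit reduction to the Cayley function $T(pz)$, whose aperiodicity and $\Delta$-continuation are classical. It should also be noted that the value of $p$ plays no role, since the transient class $S_1$ never reaches a singularity before $S_0$ does.
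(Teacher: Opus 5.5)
Your proof follows the paper's step for step: the same $\mathbf{W}$, the same triangular tree system, $D=\det(\mathbf{I}-\mathbf{J})=1-T_0$, the square-root expansion of $T_0$, and $\theta=1/2$. Your reduction to the Cayley function, which secures $\Delta$-analyticity and uniqueness of $\rho$, is a real improvement, since DLW does not supply these for a reducible system and the paper leaves them implicit.

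However, you inherit an orientation error from the paper's tree system \eqref{eq:systemofdequations}. In a functional graph the children of a node are its \emph{preimages}. So an $S_i$-rooted tree should involve $T_j$ through $w_{ji}$, not $w_{ij}$. This is invisible when $\mathbf{W}$ is symmetric (bipartite, identity), but not here. Your own correct remark that nodes of $S_1$ are never images already contradicts $T_1=(1-p)z\exp(T_0)$, which gives $S_1$-roots children in $S_0$. Moreover, with $T_0=T(pz)$ you get $M=1/(1-T(pz))$, hence $n![z^n]M=p^n n^n$. Only configurations with $S_1=\emptyset$ are counted; the $S_1$ nodes that should feed into $S_0$ have vanished. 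So the claims $T_0=T(pz)$, $\rho=1/(pe)$, $T_1=\frac{1-p}{p}T_0$ and the constant $\sqrt{2}$ are wrong for the stated configuration.

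The correct system is $T_1=(1-p)z$ (an $S_1$ node is always a leaf) and $T_0=pz\exp(T_0+T_1)$. That is, $T_0(z)=T(\psi(z))$ with $\psi(z)=pz\,e^{(1-p)z}$. One checks that $n![z^n]\,(1-T_0)^{-1}=\sum_m\binom{n}{m}p^m(1-p)^{n-m}m^n$, the $p$-weighted number of maps with image in $S_0$.

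Your argument survives this substitution:
\begin{itemize}
\item $\psi$ is entire with positive coefficients in every degree $\ge 1$. Hence $|\psi(z)|<\psi(\rho)=e^{-1}$ on $|z|=\rho$, $z\neq\rho$.
\item Since $\psi'(\rho)>0$, $T_0$ is $\Delta$-analytic with a unique square-root singularity at the positive root $\rho$ of $\psi(\rho)=e^{-1}$. Now $\rho<1/(pe)$, so the transient class does shift $\rho$, though not $\theta$. Explicitly,
\[
1-T_0(z)\sim\sqrt{2\bigl(1+(1-p)\rho\bigr)}\,\sqrt{1-z/\rho}.
\]
\item The determinant does not depend on the orientation, since
\[
\det\bigl(\mathbf{I}-\operatorname{diag}(\mathbf{T})\mathbf{W}^{\top}\bigr)=\det\bigl(\mathbf{I}-\mathbf{W}\operatorname{diag}(\mathbf{T})\bigr)=\det\bigl(\mathbf{I}-\operatorname{diag}(\mathbf{T})\mathbf{W}\bigr)=1-T_0 .
\]
\item The transfer matrix $\operatorname{diag}(\mathbf{T})\mathbf{W}$ is correctly oriented for the cycles.
\end{itemize}
Therefore $C(z)=\ln\frac{1}{1-T_0(z)}$ and $\theta=1/2$ still hold, but only once the tree system is corrected.
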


\begin{proof}
Consider the first case, where $S_0$ is the absorbing state. The transition matrix becomes $\mathbf{W} = \begin{bmatrix} 1 & 0 \\ 1 & 0 \end{bmatrix}$. The interdependent tree equations are:
\begin{align}
    T_0(z) &= pz \exp(T_0(z)), \\
    T_1(z) &= (1-p)z \exp(T_0(z)).
\end{align}
With all elements eventually terminating in $S_0$, the tree $T_1(z)$ lacks self-referential cycle formation and acts purely as transient ``dust'' feeding into the $S_0$ cores. The Jacobian matrix is calculated as:
\begin{equation*}
    \mathbf{J} = \operatorname{diag}(\mathbf{T}(z))\mathbf{W} = \begin{bmatrix} T_0(z) & 0 \\ 0 & T_1(z) \end{bmatrix} \begin{bmatrix} 1 & 0 \\ 1 & 0 \end{bmatrix} = \begin{bmatrix} T_0(z) & 0 \\ T_1(z) & 0 \end{bmatrix}.
\end{equation*}
The determinant simplifies to $D(\mathbf{T}(z)) = \det(\mathbf{I} - \mathbf{J}) = 1 - T_0(z)$. The transient tree $T_1(z)$ is completely eradicated from the spectral evaluation. Substituting the square-root expansion for $T_0(z)$ yields:
\begin{equation*}
    D(\mathbf{T}(z)) \sim c_0\sqrt{1 - \frac{z}{\rho}}.
\end{equation*}
This leaves the square-root singularity intact, preserving $\theta = 1/2$ and the $\mathcal{PD}(1/2)$ limit law. The proof for the symmetric case, where $S_1$ is the absorbing state, follows identically.
\end{proof}

\subsection{Communicating Classes in Arbitrary Dimensions}

When the $d \times d$ transition matrix $\mathbf{W}$ is reducible, its associated dependency graph is not strongly connected. Instead, the graph uniquely decomposes into a directed acyclic graph (DAG) of strongly connected components, referred to in Markov chain theory as \textit{communicating classes}. 

\begin{definition}
By permuting the indices of the partitions, any reducible transition matrix $\mathbf{W}$ can be written in block upper-triangular form, known as the Frobenius normal form:
\begin{equation}
    \mathbf{W} = \begin{bmatrix}
    \mathbf{W}_{11} & \mathbf{W}_{12} & \cdots & \mathbf{W}_{1m} \\
    \mathbf{0} & \mathbf{W}_{22} & \cdots & \mathbf{W}_{2m} \\
    \vdots & \vdots & \ddots & \vdots \\
    \mathbf{0} & \mathbf{0} & \cdots & \mathbf{W}_{mm}
    \end{bmatrix},
\end{equation}
where each diagonal block $\mathbf{W}_{kk}$ is an irreducible square matrix representing a distinct communicating class $\mathcal{C}_k$. These communicating classes fall into two categories:
\begin{itemize}
    \item \textbf{Terminal Classes (Absorbing):} A class $\mathcal{C}_k$ is terminal if it possesses no outgoing transitions to any other class. Its block row in the Frobenius form contains zeros apart from its diagonal block $\mathbf{W}_{kk}$. Elements that map into a terminal class can never leave, forming a closed, irreducible sub-system. 
    \item \textbf{Transient Classes:} A class is transient if it possesses a non-zero probability of mapping into another class. Like the $S_1$ nodes in our two-dimensional absorbing case, elements in a transient class inevitably feed into the terminal components.
\end{itemize}
For each class $\mathcal{C}_k$, let $\mathbf{T}_{\mathcal{C}_k}(z)$ denote the sub-vector of tree generating functions corresponding strictly to the partitions within $\mathcal{C}_k$.
\end{definition}

\begin{lemma}
The determinant of the block-triangular Jacobian matrix decomposes into the product of the determinants of its principal diagonal blocks.
\end{lemma}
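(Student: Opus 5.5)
The plan is to reduce the statement to the standard fact that a block upper-triangular matrix has determinant equal to the product of the determinants of its diagonal blocks. The one structural point to check first is that $\mathbf{I}-\mathbf{J}$, not only $\mathbf{W}$, inherits the Frobenius block form.

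By Lemma \ref{lem:jacobianequivalence}, $\mathbf{J} = \operatorname{diag}(\mathbf{T}(z))\mathbf{W}$. Left-multiplying by a diagonal matrix only rescales rows, so every zero entry of $\mathbf{W}$ stays zero in $\mathbf{J}$. After the index permutation putting $\mathbf{W}$ into Frobenius normal form, $\mathbf{J}$ is therefore block upper-triangular with the same block partition, and its diagonal blocks are
\begin{equation*}
\mathbf{J}_{kk} = \operatorname{diag}(\mathbf{T}_{\mathcal{C}_k}(z))\,\mathbf{W}_{kk}.
\end{equation*}
Subtracting from the identity preserves this structure, since $\mathbf{I}$ is block diagonal with identity blocks. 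Permuting indices means conjugating by a permutation matrix $\mathbf{Q}$, and $\det(\mathbf{Q}(\mathbf{I}-\mathbf{J})\mathbf{Q}^{T}) = \det(\mathbf{I}-\mathbf{J})$, so the reordering does not change $D(\mathbf{T})$.

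I will then prove the block-triangular determinant identity by induction on the number $m$ of classes. Write
\begin{equation*}
\mathbf{M} = \begin{bmatrix} \mathbf{B} & \mathbf{X} \\ \mathbf{0} & \mathbf{E} \end{bmatrix},
\end{equation*}
with $\mathbf{B}$ the first diagonal block and $\mathbf{E}$ the remaining $(m-1)$-block triangular part. I will show $\det \mathbf{M} = \det\mathbf{B}\,\det\mathbf{E}$ using the Leibniz formula. A permutation $\sigma$ gives a nonzero product only if it sends the last indices into the last indices, because the lower-left block is zero. Being a bijection on a finite set, $\sigma$ then also maps the first indices to themselves. Hence $\sigma$ splits as a pair of permutations on the two index sets, the sign is multiplicative, and the sum factors as $\det\mathbf{B}\cdot\det\mathbf{E}$. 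Applying the induction hypothesis to $\mathbf{E}$ gives
\begin{equation*}
D(\mathbf{T}) = \prod_{k=1}^m \det\big(\mathbf{I} - \operatorname{diag}(\mathbf{T}_{\mathcal{C}_k})\mathbf{W}_{kk}\big).
\end{equation*}
As a remark, the off-diagonal blocks, which encode the transient-to-terminal transitions, drop out entirely. This generalizes the observation in the absorbing $d=2$ lemma that $T_1$ is eradicated.

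There is essentially no obstacle; the only care needed is the invariance of the block zero pattern under row scaling and under the simultaneous permutation. If a slicker route is preferred, the Schur complement formula or the Laplace expansion along the first block of columns also works. Either way it gives the same factorization without any invertibility assumption, since the Leibniz argument is purely combinatorial.
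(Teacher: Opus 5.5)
Your proposal is correct and follows the same route as the paper. Both arguments note that left-multiplying $\mathbf{W}$ by $\operatorname{diag}(\mathbf{T}(z))$ preserves its Frobenius block upper-triangular zero pattern, with diagonal blocks $\operatorname{diag}(\mathbf{T}_{\mathcal{C}_k})\mathbf{W}_{kk}$, and then apply the block-triangular determinant identity to $\mathbf{I}-\mathbf{J}$. You additionally prove that identity via the Leibniz formula and check invariance under the simultaneous index permutation, both of which the paper takes as standard.
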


\begin{proof}
Defined as $\mathbf{J} = \operatorname{diag}(\mathbf{T}(z))\mathbf{W}$, the Jacobian inherently preserves the block upper-triangular structure of the transition matrix:
\begin{equation*}
    \mathbf{J} = \begin{bmatrix}
    \mathbf{J}_{11} & \mathbf{J}_{12} & \cdots & \mathbf{J}_{1m} \\
    \mathbf{0} & \mathbf{J}_{22} & \cdots & \mathbf{J}_{2m} \\
    \vdots & \vdots & \ddots & \vdots \\
    \mathbf{0} & \mathbf{0} & \cdots & \mathbf{J}_{mm}
    \end{bmatrix},
\end{equation*}
where each diagonal block evaluates to $\mathbf{J}_{kk} = \operatorname{diag}(\mathbf{T}_{\mathcal{C}_k})\mathbf{W}_{kk}$. 

The determinant of a block-triangular matrix is the product of the determinants of its principal diagonal blocks; thus
\begin{equation}
    D(\mathbf{T}(z)) = \det(\mathbf{I} - \mathbf{J}) = \prod_{k=1}^m \det(\mathbf{I} - \mathbf{J}_{kk}).
\end{equation}
As each diagonal block $\mathbf{W}_{kk}$ is irreducible by definition, each corresponding sub-system $\mathbf{J}_{kk}$ obeys Theorem \ref{thm:dlw} independently.
\end{proof}

\subsection{The Phase Transition}

With the determinant factored into its independent communicating classes, we now establish the generalized boundary conditions for the phase transition of arbitrary $d$-partite mappings.

\begin{theorem}
\label{thm:thetakhalf}
Let $\mathbf{W}$ be a reducible $d \times d$ transition matrix governing a $d$-partite random mapping. If exactly $k$ isolated terminal communicating classes share the minimal dominant singularity $\rho$, the macroscopic components of the mapping satisfy the logarithmic condition with parameter $\theta = k/2$. Consequently, the normalized component sizes converge in distribution to the Poisson-Dirichlet distribution $\mathcal{PD}(k/2)$.
\end{theorem}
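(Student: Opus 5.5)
The plan is to combine the block factorisation of $D(\mathbf{T}(z))$ from the preceding lemma with the irreducible analysis of Theorem \ref{thm:thetaonehalf}, applied separately to each class. The factorisation gives
\begin{equation*}
    C(z) = -\ln D(\mathbf{T}(z)) = \sum_{l=1}^m -\ln\det(\mathbf{I}-\mathbf{J}_{ll}),
\end{equation*}
so $C(z)$ splits into contributions $C_l(z) := -\ln\det(\mathbf{I}-\mathbf{J}_{ll})$, one per communicating class. Combinatorially, $C_l$ counts the cyclic components whose cycle lies inside $\mathcal{C}_l$. Indeed, $\operatorname{Tr}(\mathbf{A}^k)$ only sees closed walks, and a closed walk cannot leave a class in a DAG of classes.

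\textbf{Step 1: determine each class's singularity.} For a terminal class $\mathcal{C}_l$, the subsystem $\mathbf{T}_{\mathcal{C}_l} = z\mathbf{P}_l\exp(\mathbf{W}_{ll}\mathbf{T}_{\mathcal{C}_l})$ is closed and irreducible, since its row of $\mathbf{W}$ lives only in $\mathbf{W}_{ll}$. Lemma \ref{lem:d_set_dlw} and Theorem \ref{thm:dlw} then give it its own singularity $\rho_l$ with square-root expansions. The argument of Theorem \ref{thm:thetaonehalf} applies verbatim, with Perron--Frobenius on the irreducible block $\mathbf{J}_{ll}$ and a strictly positive adjugate. It yields
\begin{equation*}
    \det(\mathbf{I}-\mathbf{J}_{ll}) \sim K_l\sqrt{1-z/\rho_l}, \qquad K_l>0.
\end{equation*}
For a transient class, the trees also depend on the trees of downstream classes. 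I would argue that its determinant factor is analytic and nonzero at $\rho=\min_l \rho_l$. The reason is that the spectral radius of $\mathbf{J}_{ll}$ at $\rho$ stays strictly below $1$; this is exactly the situation of $T_1$ in the absorbing two-dimensional lemma, where $T_1$ dropped out of the determinant entirely.

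\textbf{Step 2: the main obstacle.} The hardest part is the transient classes, for two reasons. First, their tree functions inherit a square root from the downstream terminal trees, so $\det(\mathbf{I}-\mathbf{J}_{ll})$ is not analytic at $\rho$. It is, however, of the form $a_l - b_l\sqrt{1-z/\rho}+\dots$ with $a_l>0$. Taking a logarithm therefore only contributes an analytic constant plus an $O(\sqrt{1-z/\rho})$ term, which transfers to $O(k^{-3/2})$ coefficients and is negligible against $1/k$. Second, a transient class could itself reach spectral radius $1$ before $\rho$, i.e. $\rho_l<\rho$. The statement's hypothesis that the dominant singularity is attained by terminal classes is what I would invoke to exclude this. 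If necessary, I would make it explicit that "minimal dominant singularity" means the minimum over all classes of the point where $\rho(\mathbf{J}_{ll})=1$, and that this minimum is achieved exactly by the $k$ terminal classes in question.

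\textbf{Step 3: assemble the classes.} Terminal classes with $\rho_l>\rho$ give contributions analytic at $\rho$. Summing over the $k$ terminal classes with $\rho_l=\rho$ then gives
\begin{equation*}
    C(z) = -\sum_{l}\ln K_l + \frac{k}{2}\ln\frac{1}{1-z/\rho} + O\!\left(\sqrt{1-z/\rho}\right).
\end{equation*}
This matches the $d=2$ identity-matrix computation, which is the case $k=2$. Dominance on the circle $|z|=\rho$ follows from a-aperiodicity of each terminal subsystem, which gives a $\Delta$-domain, so Theorem \ref{thm:transfer} yields $\lambda_j=[z^j]C(z)\rho^j\sim k/(2j)$. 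Thus $j\lambda_j\to k/2$, the logarithmic condition of Definition \ref{def:logarithmic_condition} holds with $\theta=k/2$, and Theorem \ref{thm:general_macro} gives convergence to $\mathcal{PD}(k/2)$.
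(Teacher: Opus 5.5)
Your proposal is correct and follows essentially the same route as the paper. You factor $\det(\mathbf{I}-\mathbf{J})$ over the Frobenius blocks and let each of the $k$ dominant terminal classes contribute a factor $\sim K_l\sqrt{1-z/\rho}$ via the irreducible argument of Theorem~\ref{thm:thetaonehalf}; all remaining blocks contribute non-vanishing factors, so $-\ln D(\mathbf{T}(z))$ produces $\frac{k}{2}\ln\frac{1}{1-z/\rho}$. Your handling of the transient blocks is more careful than the paper's, which simply asserts that they have strictly larger radius and constant determinants $K_j$. You correctly note that transient blocks feeding into a dominant class are only of the form $a_l-b_l\sqrt{1-z/\rho}+\cdots$, which is harmless after taking logarithms, and that the hypothesis must be read as excluding any transient class from reaching spectral radius $1$ at or before $\rho$.
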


\begin{proof}
Let $\mathcal{C}_1, \dots, \mathcal{C}_m$ be the communicating classes of a reducible transition matrix $\mathbf{W}$. By Theorem \ref{thm:dlw}, each isolated sub-system possesses its own localized dominant singularity $\rho_j$. The global dominant singularity $\rho$ of the entire functional graph is the minimum of these localized radii:
\begin{equation*}
    \rho = \min_{1 \le j \le m} \{ \rho_j \}.
\end{equation*}

Let $\mathcal{K}$ denote the index set of the communicating classes that attain this global minimum radius $\rho$. Let $k = |\mathcal{K}|$ be the number of such classes. When we evaluate the factored characteristic determinant $D(\mathbf{T}(z))$ as $z \to \rho$, the blocks partition into two distinct asymptotic regimes:
\begin{enumerate}
    \item \textbf{Dominant Classes ($j \in \mathcal{K}$):} These classes reach their spectral radius of $1$ at exactly $z = \rho$. Their determinants evaluate to the singular expansion $\det(\mathbf{I} - \mathbf{J}_{jj}) \sim c_j \sqrt{1 - z/\rho}$.
    \item \textbf{Transient and Sub-Dominant Classes ($j \notin \mathcal{K}$):} These classes possess a strictly larger radius of convergence ($\rho_j > \rho$). At $z = \rho$, their spectral radii remain strictly less than $1$, meaning their determinants evaluate to positive, non-zero constants: $\det(\mathbf{I} - \mathbf{J}_{jj}) = K_j > 0$.
\end{enumerate}

Substituting these two regimes back into the factored determinant yields:
\begin{equation}
    D(\mathbf{T}(z)) \sim \left( \prod_{j \notin \mathcal{K}} K_j \right) \prod_{j \in \mathcal{K}} \left( c_j \sqrt{1 - \frac{z}{\rho}} \right).
\end{equation}

By grouping the constants and the singular coefficients into a single global constant $C > 0$, the $k$ independent square roots algebraically multiply:
\begin{equation}
    D(\mathbf{T}(z)) \sim C \left( \sqrt{1 - \frac{z}{\rho}} \right)^k = C \left(1 - \frac{z}{\rho}\right)^{k/2}.
\end{equation}

Extracting the cycle generating function via the logarithmic transformation $C(z) = -\ln(D(\mathbf{T}(z)))$, the exponent strictly factors out as the universal logarithmic parameter:
\begin{equation}
    C(z) \sim -\ln\left( C \left(1 - \frac{z}{\rho}\right)^{k/2} \right) \sim \frac{k}{2} \ln\left(\frac{1}{1 - z/\rho}\right).
\end{equation}
This confirms the functional graph satisfies the logarithmic condition with parameter $\theta = k/2$, concluding the proof.
\end{proof}

\section{Conclusion and Future Work}
In this paper, we established the limit laws for arbitrary $d$-set functional graphs, proving that their normalized component sizes converge in distribution to the Poisson-Dirichlet family, $\mathcal{PD}(k/2)$.

As commented in the introduction, these constrained mapping structures arise naturally in cryptographic algorithms, such as the 3-partite walks utilized in Pollard's Rho algorithm and the block-shuffling mechanisms of Generalized Feistel Networks. The analytic framework introduced in this paper may help to understand the time complexity and collision properties of these highly structured operations.

\bibliographystyle{plain} % You can change 'plain' to 'alpha' or 'abbrv' later if Panario prefers
\bibliography{references} % This points to your new references.bib file

@book{flajolet2009analytic,
  title={Analytic Combinatorics},
  author={Flajolet, Philippe and Sedgewick, Robert},
  year={2009},
  publisher={Cambridge University Press},
  address={Cambridge}
}

@book{arratia2003logarithmic,
  title={Logarithmic Combinatorial Structures: A Probabilistic Approach},
  author={Arratia, Richard and Barbour, Andrew D. and Tavar{\'e}, Simon},
  year={2003},
  publisher={European Mathematical Society},
  address={Z{\"u}rich}
}

@inproceedings{flajolet1990random,
  title={Random mapping statistics},
  author={Flajolet, Philippe and Odlyzko, Andrew M.},
  booktitle={Advances in Cryptology---EUROCRYPT '89},
  pages={329--354},
  year={1990},
  publisher={Springer}
}

@article{flajolet1990singularity,
  title={Singularity analysis of generating functions},
  author={Flajolet, Philippe and Odlyzko, Andrew M.},
  journal={SIAM Journal on Discrete Mathematics},
  volume={3},
  number={2},
  pages={216--240},
  year={1990},
  publisher={SIAM}
}

@article{hansen2000large,
  title={Large components of bipartite random mappings},
  author={Hansen, Jennie and Jaworski, Jerzy},
  journal={Random Structures \& Algorithms},
  volume={17},
  number={3-4},
  pages={317--342},
  year={2000},
  publisher={Wiley Online Library}
}

@book{bors2026functional,
  title={Functional Graphs of Generalized Cyclotomic Mappings of Finite Fields},
  author={Bors, Alexander and Panario, Daniel and Wang, Qiang},
  series={Memoirs of the European Mathematical Society},
  volume={23},
  year={2026},
  publisher={EMS Press},
  doi={10.4171/MEMS/23}
}

@article{drmota1997systems,
  title={Systems of functional equations},
  author={Drmota, Michael},
  journal={Random Structures \& Algorithms},
  volume={10},
  number={1-2},
  pages={103--124},
  year={1997},
  publisher={Wiley Online Library}
}

@article{bell2011characteristic,
  title={Characteristic points of recursive systems},
  author={Bell, Jason P. and Burris, Stanley N. and Yeats, Karen A.},
  journal={The Electronic Journal of Combinatorics},
  volume={18},
  number={1},
  pages={P54},
  year={2011}
}

@book{kolchin1986,
  title={Random Mappings},
  author={Kolchin, Valentin F.},
  year={1986},
  publisher={Optimization Software},
  address={New York}
}

@article{goncharov1962,
  title={On the field of combinatorial analysis},
  author={Goncharov, Vladimir L.},
  journal={American Mathematical Society Translations},
  series={Series 2},
  volume={19},
  pages={1--46},
  year={1962},
  publisher={American Mathematical Society}
}

@article{teske1998,
  title={Speeding up {Pollard's} rho method for computing discrete logarithms},
  author={Teske, Edlyn},
  journal={Algorithmic Number Theory},
  pages={541--554},
  year={1998},
  publisher={Springer}
}

@article{luby1988,
  title={How to construct pseudorandom permutations from pseudorandom functions},
  author={Luby, Michael and Rackoff, Charles},
  journal={SIAM Journal on Computing},
  volume={17},
  number={2},
  pages={373--386},
  year={1988},
  publisher={SIAM}
}

@article{flajolet1990gaussian,
  title={Gaussian limiting distributions for the number of components in combinatorial structures},
  author={Flajolet, Philippe and Soria, Mich{\`e}le},
  journal={Journal of Combinatorial Theory, Series A},
  volume={53},
  number={2},
  pages={165--182},
  year={1990},
  publisher={Elsevier}
}

@phdthesis{gourdon1996combinatoire,
  title={Combinatoire, algorithmique et g{\'e}om{\'e}trie des polyn{\^o}mes},
  author={Gourdon, Xavier},
  year={1996},
  school={Ecole Polytechnique}
}

@article{panario2001smallest,
  title={Smallest components in decomposable structures: exp-log class},
  author={Panario, Daniel and Richmond, Bruce},
  journal={Algorithmica},
  volume={29},
  number={1-2},
  pages={205--226},
  year={2001},
  publisher={Springer}
}

\end{document}